\newtheorem{defini}{Definition}
\newtheorem{notation}{Notation}
\newtheorem{case_dist}{Case}
\renewcommand{\phi}{\varphi}
\title{Reduction Complexities in Set Theory}
\author{Merlin Carl}
\date{July 2025}
\institute{Institut f\"ur Mathematik, Europa-Universit\"at Flensburg}
\begin{document}


\maketitle

\begin{abstract}
    In \cite{Ca2016} and \cite{Ca2018}, we introduced a notion of effective reducibility between set-theoretical $\Pi_{2}$-statements; in \cite{Ca2025}, this was extended to statements of arbitrary (potentially even infinite) quantifier complexity. We also considered a corresponding notion of Weihrauch reducibility, which allows only one call to the effectivizer of $\psi$ in a reduction of $\phi$ to $\psi$. In Stammes \cite{StammesMaster}, a considerably refined analysis through interpolating between these two notions was proposed, where one asks how many calls to an effectivizer for $\psi$ are required for effectivizing $\phi$. This allows us to make formally precise questions such as ``how many ordinals does one need to check for being cardinals in order to compute the cardinality of a given ordinal?'' and (partially) answer many of them. Many of these anwers turn out to be independent of ZFC. 
\end{abstract}

\section{Introduction and Basic Notions}

Imagine you have a transfinite life span, are given an ordinal $\alpha$ and you want to know its cardinality. You have a black box, which, given an ordinal $\beta$, will answer ``yes'' or ``no'', depending on whether or not $\beta$ is a cardinal. Then you can solve your problem by checking all ordinals below $\alpha$ then taking the supremum of all those that turn out to be cardinals. This method requires $\alpha$ many (in terms of the order type) accesses to the black box. Can you do any better?

In classical computability theory, such questions are treated under the name of ``bounded queries'', see, e.g., Martin and Gasarch, \cite{Gasarch1}, Gasarch and Stefan \cite{Gasarch2}, and Gasarch \cite{Gasarch3}.\footnote{We thank Vasco Brattka for pointing out this reference to us.} In transfinite computability, such a notion was proposed in Stammes \cite{StammesMaster}, Definition 4.2.

To answer questions such as the one just formulated, this needs to be generalized to the transfinite. In line with \cite{Ca2016}, \cite{Ca2018} and \cite{Ca2025}, we will use Koepke's ordinal Turing machines (\cite{Koepke}) to model the idea of a transfinite ``method''. 

More generally, in this paper, we want to measure the relative complexity  -- which we call ``reduction complexity'' -- of certain functions embodying natural set-theoretical principles (such as ``every set is equivalent to a cardinal'') by the number of calls to one function that one needs in order to compute (on an ordinal Turing machine) another. 

\subsection{Basic Definitions and Notation} 

The model of computation underlying this paper are Koepke's ordinal Turing machines (OTMs), introduced in \cite{Koepke}. A pair $(P,\rho)$ of an OTM-program and an ordinal parameter $\rho$ is called a \textit{parameter-program}. If we imagine the OTM-programs to be enumerated in order type $\omega$ in some natural way, we can identify a program $P_{k}$ with its index $k$ and encode a parameter-program $(P_{k},\rho)$ by a single ordinal $\gamma:=\omega\rho+k$; below, we will occasionally use this to simplify our notation when program and parameter do not need to be considered separately.

Although the function types considered in this paper can be regarded as types of ``effectivizers'' (in the sense of \cite{Ca2025}) of certain set-theoretical statements that were considered in \cite{Ca2025}, it saves some technical details to define them directly here.

\begin{itemize}
    \item A Pot function is a class function $F:\mathfrak{P}(\text{On})\rightarrow\mathfrak{P}(\text{On})$ that maps every encoding of a set to an encoding of its power set. 
    \item A PowerCard function is a class function $F:\mathfrak{P}(\text{On})\rightarrow\text{On}$ that maps every encoding of a set to the cardinality of its power set.
    \item A NextCard function is a class function $F:\text{On}\rightarrow\text{On}$ that maps every ordinal $\alpha$ to its cardinal successor $\alpha^{+}$. 
    \item An OrdCard function is a class function $F:\text{On}\rightarrow\text{On}$ that maps every ordinal $\alpha$ to its cardinality $\text{card}(\alpha)$. 
    \item DecCard denotes the class function $F:\text{On}\rightarrow\{0,1\}$ that is defined by $$F(\alpha)=\begin{cases}1\text{, if }\alpha\text{ is a cardinal},\\0\text{, otherwise}\end{cases}.$$
    \item DecReg denotes the (unique) class function $F:\text{On}\rightarrow\{0,1\}$ that is defined by $$F(\alpha)=\begin{cases}1\text{, if }\alpha\text{ is a regular cardinal},\\0\text{, otherwise}\end{cases}.$$
    \item CardExp denotes cardinal exponentiation, i.e. the (unique) class function that maps an ordered pair $(\kappa,\lambda)$ of cardinals to the cardinal $\kappa^{\lambda}$. 
    \item Cof denotes the cofinality function, i.e., the (unique) class function that sends each ordinal to its cofinality. 
    \item  For $n\in\omega$, a $\Sigma_{n}$-Sep function is a class function $F:\mathfrak{P}(\text{On})\times\omega\times\mathfrak{P}(\text{On})\rightarrow\mathfrak{P}(\text{On})$ that maps a triple $(c(S),k,c(\vec{p}))$ consisting of an encoding of a set $S$, an index $k$ for a $\Sigma_{n}$ $\in$-formula $\phi_{k}$ and an encoding of a finite tuple $\vec{p}$ of sets to an encoding of the set $\{x\in S:\phi_{k}(x,\vec{p})$ if $\vec{p}$ has the right length, and to $\emptyset$, otherwise.
    \item For $n\in\omega$, a $\Sigma_{n}$-Truth function is a class function $F:\omega\times\mathfrak{P}(\text{On})\rightarrow\{0,1\}$ that maps a pair $(k,c(\vec{p}))$ consisting an index $k$ for a $\Sigma_{n}$ $\in$-formula $\phi_{k}$ and an encoding of a finite tuple $\vec{p}$ of sets to $0$ or $1$, according to the following condition: $$F(k,c(\vec{p}))=\begin{cases}1\text{, if }\phi_{k}(\vec{p})\\0\text{, otherwise}\end{cases}.$$
    
\end{itemize}

Note that, due to the possibility of different encodings, these are function \text{types} rather than particular functions, although for the types OrdCard, NextCard, PowerCard and DecCard, there is only one (class) function that belongs to them. Since the functions we consider are proper classes, these types cannot be introduced as objects in ZFC. One way to formalize the work below in ZFC is via talking about properties of formulas instead. We will not go into the details of the formalization.

In agreement with the definitions in \cite{Ca2025}, we say that one function type $A$ is OTM-reducible to another function type $B$, written $A\leq_{\text{OTM}}B$ if and only if there is a parameter-OTM-program $(P,\rho)$ such that, for each function $F$ of type $B$, $P^{F}(\rho)$ computes a function of type $A$. If this computation makes at most one call to $F$ for each input, we say that $A$ is ordinal Weihrauch reducible to $B$ and write $A\leq_{\text{oW}}B$.

The gap between OTM-reducibility and oW-reducibility is rather wide: In the case of the former, we allow any number of calls to the external function (i.e., the ``effectivizer''), while in the latter, only a single one is allowed. 
In \cite{StammesMaster}, Stammes proposed a more refined notion, differentiating reductions by the required number of calls to the extra function.  To this end, we fix the following definition:\footnote{Stammes only considered constant bounds on the number of calls to the extra function; the definition below is thus a slight definition of his.} 

\begin{defini} (Cf. Stammes, \cite{StammesMaster}, Definition 4.2)

Let $\Phi$ and $\Psi$ be types of (class) functions, and let $f:V\rightarrow\text{On}$ be a (class) function. We say that $\Phi$ is $f$-OTM-reducible to $\Psi$ if and only if is a parameter-program $(P,\rho)$ which OTM-reduces $\Phi$ to $\Psi$ and, for any instance $a$ and any $F$ of type $\Psi$, the order type of calls to $F$ in the computation $P^{F}(a,\rho)$ is at most $f(a)$. We denote this by $\Phi\leq_{\text{OTM}}^{f}\Psi$. In particular, if $f$ is constant with value $\xi$, we write $\Phi\leq_{\text{OTM}}^{\xi}\Psi$. 

If the order type of the set of times at which calls to the extra function take place is strictly below $f(x)$ for all but set many $x$, we write $\Phi\leq_{\text{OTM}}^{<f}\Psi$ and say that $f$ is an upper bound to the reduction complexity of $\Phi$ to $\Psi$. If, on the other hand, the number of calls to the extra function is at least $f(x)$ for all but set many inputs $x$, we write $\Phi\leq_{\text{OTM}}{\geq f}\Psi$ and say that $f$ is a \textit{lower bound} for the reduction complexity of $\Phi$ to $\Psi$.

\end{defini}

At this point, one might wonder why we are using the order type of the calls to the extra function $F$ in the computation, rather than the cardinality of the set of times at which such calls take place, as a measure for ``how often'' $F$ is accessed. Intuitively, it is clear that the order type needs a finer measure, as, e.g., in a sequence of $\omega+1$ calls, the instance to which the last call is applied is informed by infinitely many instances of $F$, which is not possible in a sequence of calls of order type $\omega$. 
The following example -- which is admittedly a bit artificial -- illustrates this point (thereby answering Question $2$ in \cite{StammesMaster}): 

\begin{proposition}
    For $\xi\in\text{On}$, let a $\xi$-NextCard function be a function that maps an ordinal $\alpha$ to the $\xi$-th cardinal greater than $\alpha$ (thus, for example, an $\omega$-NextCard function would map $\aleph_{0}$ to $\aleph_{\omega}$). Then, for any $\xi$, we have $\xi-\text{NextCard}\leq_{\text{OTM}}^{\xi}\text{NextCard}$, while $\xi-\text{NextCard}\nleq_{\text{OTM}}^{\zeta}\text{NextCard}$ for all $\zeta<\xi$.
\end{proposition}
\begin{proof}
    The reduction $\xi-\text{NextCard}\leq_{\text{OTM}}^{\xi}\text{NextCard}$ is obvious by iteratively applying NextCard to a given ordinal $\alpha$ for $\xi$ many times, taking unions at limits. (To be able to perform $\xi$ applications of NextCard, we can put $\xi$ in the parameter of our program.)

    To see that the reduction cannot work with fewer than $\xi$ applications, we show by induction on $\iota$ that a halting OTM-program starting on a tape on which at most the first $\alpha$ many cells contain $1$s will, with $\iota$ many applications of NextCard, halt in a state where the set of cells on which it has written a $1$ is bounded above by some $\lambda<\aleph_{\beta+\iota+1}$, where $\text{card}(\alpha)=\aleph_{\beta}$. 

    For $\iota=0$, this is folklore\footnote{For OTMs, see, e.g. \cite{CarlBuch}, proof of Lemma 8.6.3. The analogous statement for Infinite Time Turing Machines was already noted and proved in  Hamkins and Lewis, \cite{Hamkins-Lewis}, Theorem 1.1.} 
    
    For $\iota=\delta+1$, we will, after the $\delta$-th call to $F$, have written on a set of cells bounded above by some $\lambda < \aleph_{\beta+\delta}$. Now, as in the case $\iota=0$, before the next call to NextCard takes place, the machine will make less than $\aleph_{\beta+\delta+1}$ many steps (to apply the case $\iota=0$ here, imagine the machine changed to halt when the original program would have called $F$), resulting on a tape on which the indices of the cells written on is again bounded above by some $\delta^{\prime}<\aleph_{\beta+\delta+1}$. Applying NextCard will then lead to a tape on which these indices are bounded by some $\delta^{\prime\prime}<\aleph_{\beta+\delta+2}$, and without any further calls to $F$, it must halt in $\delta^{\prime\prime\prime}<\aleph_{\beta+\delta+2}=\aleph_{\beta+\iota+1}$ many steps, and thus the set of indices of cells it wrote on is bounded above by $\aleph_{\beta+\iota+1}$. 
    
    Finally, when $\iota$ is a limit ordinal, then the supremum of the cells written on right before the $\iota$-th call is the supremum of the suprema of these sets for all $\zeta<\iota$, which, by assumption, are all strictly smaller than $\aleph_{\beta+\iota}$. Thus, with the $\iota$-th call, the set of these indices is bounded above by $\aleph_{\beta+\iota+1}$, and it remains so when another halting OTM-computation is performed on it. 

    Thus, with fewer than $\xi$ many applications of NextCard, we cannot even reach enough cells to write the result of $\xi$-NextCard.
\end{proof}

As a consequence, we have reductions that work with $\omega+1$ many calls to $F$, but not with $\omega$ many calls, with $\omega+2$ many calls, but not with $\omega+1$ many calls etc. Thus, the order type is the more sensible measure of complexity here, rather than mere cardinality. 

\section{Basic Tools}

In this section, we gather some basic observations about reduction complexity, along with some lemmata that will help in proving concrete results later on.

\begin{proposition}{\label{transitivity}} (Cf. Stammes, \cite{StammesMaster}, Proposition 4.5) 
If $\Phi\leq_{\text{OTM}}^{f}\Psi\leq_{\text{OTM}}^{g}\Gamma$, then $\Phi\leq_{\text{OTM}}^{f\cdot g}\Gamma$.
\end{proposition}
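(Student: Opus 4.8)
The plan is to compose the two given reductions in the obvious way and then count oracle calls. Fix a parameter-program $(P,\rho)$ witnessing $\Phi\leq_{\text{OTM}}^{f}\Psi$ and a parameter-program $(Q,\sigma)$ witnessing $\Psi\leq_{\text{OTM}}^{g}\Gamma$. Let $\tau$ be a parameter coding the pair $(\rho,\sigma)$, and let $R$ be the OTM-program that, on input $a$ with parameter $\tau$, simulates the run of $P$ on $(a,\rho)$ step by step, with one modification: whenever the simulated run of $P$ issues a query to its $\Psi$-oracle on a code $c(S)$, the program $R$ suspends the simulation, runs $Q$ on $(c(S),\sigma)$ using its own $\Gamma$-oracle $G$, and then resumes the simulation of $P$ with the output of this subcomputation inserted as the oracle answer. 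This is the standard composition of oracle machines, and carrying it out on an OTM is routine bookkeeping.

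Next I would verify correctness. For a fixed $G$ of type $\Gamma$, the assumption $\Psi\leq_{\text{OTM}}^{g}\Gamma$ guarantees that $S\mapsto Q^{G}(c(S),\sigma)$ is a total class function $F_{G}$ of type $\Psi$; it is genuinely a function (not merely a relation) because $Q$ is deterministic and $G$ is fixed, so repeated queries of $P$ on the same set receive consistent answers. (We may assume without loss of generality that $P$ only ever queries its oracle on valid $\Psi$-instances, since otherwise its output could depend on the oracle and it would not be a correct reduction.) Hence $R^{G}(a,\tau)$ faithfully reproduces the final output of $P^{F_{G}}(a,\rho)$, and since $(P,\rho)$ witnesses $\Phi\leq_{\text{OTM}}\Psi$, that output is the correct $\Phi$-value of $a$. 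So $(R,\tau)$ OTM-reduces $\Phi$ to $\Gamma$.

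Finally I would count the calls to $G$. In the run $R^{G}(a,\tau)$ the stages at which $G$ is queried split into consecutive blocks, one block per query that the simulated $P$ makes to its $\Psi$-oracle. By $\Phi\leq_{\text{OTM}}^{f}\Psi$ the order type of the set of $P$'s queries is some $\theta\leq f(a)$, say on sets $\langle S_{\xi}:\xi<\theta\rangle$; and by $\Psi\leq_{\text{OTM}}^{g}\Gamma$ the $\xi$-th block, being the run $Q^{G}(c(S_{\xi}),\sigma)$, contributes at most $g(S_{\xi})$ many queries to $G$. Thus the order type of the whole set of $G$-queries is at most the ordinal obtained by concatenating $\theta\leq f(a)$ blocks of order types bounded by the relevant values of $g$; when $f$ and $g$ are constant — the case relevant to the applications — this is exactly $f\cdot g$ (with the convention that $\alpha\cdot\beta$ is the order type of $\alpha$ consecutive copies of $\beta$).

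The delicate point, and the one I expect to be the main obstacle, is precisely this last bookkeeping: the inner bound $g$ is evaluated at the intermediate instances $S_{\xi}$ produced by $P$, not at the original instance $a$, so reading the total bound literally as the pointwise ordinal product $f\cdot g$ is transparent for constant $f,g$ but in full generality requires either that $g$ be suitably bounded along these intermediate instances or a correspondingly coarser formulation. The simulation and the correctness argument, by contrast, are entirely routine, and the only other thing to be careful about is fixing the ordinal-multiplication convention consistently so that the concatenation of $f(a)$ blocks of length $g$ is indeed $f(a)\cdot g$.
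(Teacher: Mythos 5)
The paper states this proposition without proof, treating it as routine, and your composition-of-reductions argument is exactly the intended one: simulate $P$ and answer each of its $\Psi$-queries by a run of $Q$ against the $\Gamma$-oracle, then count blocks. Your correctness argument and the block count are fine, and the two wrinkles you flag are genuine features of the statement rather than defects of your argument. First, the inner bound: the definition bounds the calls of $Q^{G}(b,\sigma)$ by $g(b)$, where $b$ is the instance handed to $Q$, so the total count is $\sum_{\xi<\theta} g(S_{\xi})$ with $\theta\leq f(a)$; reading the conclusion as the pointwise function $a\mapsto f(a)\cdot g(a)$ therefore needs an extra hypothesis (constant $g$, or monotonicity of $g$ together with control of the intermediate instances $S_{\xi}$, which an OTM run can inflate, e.g.\ via the oracle answers themselves), and it would be worth saying explicitly that the clean unconditional statement is with constant bounds, which is also the only case the paper later uses (e.g.\ PowerCard$\leq^{2}_{\text{OTM}}$Pot composed with constant-bound reductions). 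Second, the multiplication convention: under the standard convention that $\alpha\cdot\beta$ is $\beta$ copies of $\alpha$, concatenating at most $f(a)$ blocks each of order type at most $\gamma$ gives order type at most $\gamma\cdot f(a)$, so the bound is literally $g\cdot f$ in standard notation; your parenthetical fixing the opposite convention is the right way to reconcile this with the paper's ``$f\cdot g$'', but state it once and use it consistently. With those two caveats made explicit, your proof is complete and matches what the author evidently had in mind.
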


While upper bounds for reduction complexities can be read off from concrete constructions, lower bounds require more work. Currently, our best tool for lower bounds is Corollary \ref{cardinal time bound} below.

\begin{notation}
For any ($F$-)OTM-program $P$, any set  $a$ and any sequence $\vec{s}:=(s_{\iota}:\iota<\xi)$ of sets of ordinals, denote by $P^{F\rightarrow\vec{s}}(a)$ the computation that is obtained when, for any $\iota<\xi$, the $\iota$-th call that $P$ makes to $F$ is answered with $\vec{s}(\iota)$. 
\end{notation}

Thus, if $\xi$ is the order type of all calls that $P^{F}(a)$ makes to $F$ and $\vec{s}$ is the sequence of values that $F$ returns at these calls, then $P^{F\rightarrow\vec{s}}(a)$ is the same computation (as a sequence of machine states) as $P^{F}(a)$. 
In particular, the computation of $P^{F}(a)$ can be OTM-effectively obtained from $\vec{s}$ and $a$.

\begin{lemma}{\label{no repeated requests}}
For each parameter-program $\gamma$, there is a parameter-program $\tilde{\gamma}$ such that, for each class function $F:V\rightarrow V$, $\tilde{\gamma}^{F}$ computes the same function as $\gamma^{F}$, but with the extra property that $\tilde{\gamma}$ never calls $F$ twice on the same argument.
\end{lemma}
\begin{proof}
    $\tilde{\gamma}$ runs like $\gamma$, but uses an extra tape $S$. Whenever $\gamma$ makes a call to $F$ to determine $F(x)$, $\tilde{\gamma}$ first checks whether some pair of the form $(x,y)$ is contained on $S$. If it is, then $y$ is retrieved from $S$ and the call to $F$ is not executed. Otherwise, the call to $F$ is executed, and the pair $(x,F(x))$ is stored on $S$. 
\end{proof}

\begin{defini}
    For $k\in\omega$ and $a$ a set, let $\sigma_{k}(a)$ be the minimal ordinal $\alpha$ such that $L_{\alpha}[a]\prec_{\Sigma_{k}}L[a]$. 
\end{defini}

\begin{remark}
If $a$ is transitive, let $H$ be the $\Sigma_{n}$-hull of $a\cup\{a\}$ in $L[a]$; forming the transitive collapse $\overline{H}$ of the result will leave $a$ fixed, so that, by the condensation lemma, we have $\overline{H}=L_{\xi}[a]$ for some ordinal $\xi$; by definition, $L_{\xi}[a]\prec_{\Sigma_{n}}L[a]$. Thus, whenever $a$ is transitive, $\sigma_{k}(a)$ exists (and these are the only cases we care about in this paper). Moreover, the cardinality of $H$ in $L[a]$ is bounded above by $\text{card}^{L[a]}(a)\times\aleph_{0}$, so, when $a$ is infinite, we will have $\text{card}^{L[a]}(\xi)=\text{card}^{L[a]}(a)$ and in particular $\xi< (\text{card}(a))^{+}$. 
\end{remark}

\begin{lemma}{\label{cardinality and reduction complexity}}
 Let $P$ be an OTM-program, $\rho\in\text{On}$ a parameter, 
 let $F$ be a 
 class function and $t$ an initial tape content (i.e., an input, which we assume to be set-sized, i.e., bounded).  Let $(\tau_{\iota}:\iota<\xi)$ be the sequence of times at which $F$ is called, and let $\vec{v}:=(v_{\iota}:\iota<\xi)$ be the sequence of values returned by $F$ at these times.
 
  Then $\tau_{\gamma}<\sigma_{1}(\{t\}\cup(\rho+1)\cup \text{tc}(\{\vec{v}\upharpoonright\gamma\}))$, and thus, in particular, $\tau_{\gamma}<\sigma_{1}(\{t\}\cup(\rho+1)\cup \text{tc}(\{\vec{v}\}))$ for all $\gamma<\xi$.

  Finally, if $P^{F}(t,\rho)$ halts, then its halting time is also strictly smaller than $\sigma_{1}(\{t\}\cup(\rho+1)\cup \text{tc}(\{\vec{v}\}))$. 

\end{lemma}
\begin{proof}
    Let $\gamma<\xi$, and let $z$ be the computation state (tape content and inner state) of $P^{F}(t,\rho)$ at time $\tau_{\gamma}+1$, i.e., right after the $\gamma$-th call to $F$. Then $z$ is $\Sigma_{1}$-definable in $t$, $\rho$ and $\gamma$, and, since $\gamma$ is $\Sigma_{1}$-definable from $\vec{v}\upharpoonright\gamma$, it is an element of $L_{\sigma_{1}(\{t\}\cup(\rho+1)\cup \text{tc}(\{\vec{v}\}))}[t]$. 

    Now, the statement that $P$, when run on the initial configuration $z$, makes at least one call to the extra function is $\Sigma_{1}$, and (by assumption) true in $V$ and thus in $L[\text{tc}(\{\vec{v}\upharpoonright\gamma\})]$. Consequently, it must be true in $L_{\sigma_{1}(\{t\}\cup(\rho+1)\cup \text{tc}(\{\vec{v}\upharpoonright\gamma\}))}$. But then, the $\gamma$-th call must take place before time $\sigma_{1}(\{t\}\cup(\rho+1)\cup \text{tc}(\{\vec{v}\upharpoonright\gamma\}))$, which is what we wanted. 

   The second claim now follow immediately. For the third claim, just note that, if all calls to $F$ in the computation of $P^{F}(t,\rho)$ are contained in $\vec{v}$, then the statement ``There is a time at which $P^{F}(t,\rho)$ halts'' is $\Sigma_{1}$ in $\vec{v}$, $t$ and $\rho$, and thus the same argument as above shows that the halting time must be below $\sigma_{1}(\{t\}\cup(\rho+1)\cup \text{tc}(\{\vec{v}\}))$.
 
\end{proof}

\begin{corollary}{\label{cardinal time bound}}
Suppose that $F:V\rightarrow V$ is a  
class function such that $\text{card}(\text{tc}(F(x)))\leq\text{card}(\text{tc}(x))$ for  all sets $x$ (i.e., in the terminology of Hodges \cite{Hodges}, $F$ does not ``raise cardinalities''), let $P$ be an OTM-program, $\rho\in\text{On}$, $t\subseteq\text{On}$ a set of ordinals (encoding the initial tape content) and $\kappa>\rho,\text{sup}(t)$ an uncountable cardinal. 

Then, if $P^{F}(s,\rho)$ halts in more than $\kappa$ many steps, it makes $\kappa$ many calls to $F$ before time $\kappa$. 

Moreover, if $P^{F}(s,\rho)$ computes for at least $\kappa$ many steps and makes less than $\kappa$ many calls to $F$ before time $\kappa$, then it will diverge without making any further calls to $F$ at or after time $\kappa$. 
\end{corollary}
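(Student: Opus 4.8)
The plan is to derive the corollary from Lemma~\ref{cardinality and reduction complexity}, adding to what is already in that proof two ingredients: a cardinal-arithmetic estimate that keeps the relevant value of $\sigma_{1}$ below $\kappa$, and a small extension of the $\Sigma_{1}$-reflection argument there --- from ``the computation halts'' to ``the computation makes one more call''. So suppose $P^{F}(t,\rho)$ makes $\beta<\kappa$ calls to $F$ before time $\kappa$; let $\vec{w}=(v_{\iota}:\iota<\beta)$ be the values returned at those calls, and put $a:=\{t\}\cup(\rho+1)\cup\text{tc}(\{\vec{w}\})$.

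The first step is to check that $\text{card}(\text{tc}(a))<\kappa$, whence $\sigma_{1}(a)<\kappa$ by the remark following the definition of $\sigma_{k}$. Since $\kappa$ is a cardinal with $\rho,\sup(t)<\kappa$, the sets $\rho+1$ and $t$ contribute transitive closures of cardinality $<\kappa$. For each $\iota<\beta$, the queried set $x_{\iota}$ is read off an OTM configuration reached in fewer than $\kappa$ steps, so --- the head having started over an input occupying only cells of index $<\kappa$ and having visited fewer than $\kappa$ cells --- $x_{\iota}$ is a set of ordinals below some ordinal $<\kappa$, whence $\text{card}(\text{tc}(x_{\iota}))<\kappa$; as $F$ does not raise cardinalities, $\text{card}(\text{tc}(v_{\iota}))=\text{card}(\text{tc}(F(x_{\iota})))\le\text{card}(\text{tc}(x_{\iota}))<\kappa$. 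Assembling $\beta<\kappa$ such values into $\vec{w}$ then keeps the transitive closure below $\kappa$: this is immediate for regular $\kappa$, while for singular $\kappa$ one additionally needs the transitive closures of the queried sets not to be cofinal in $\kappa$, which is automatic once $\beta<\text{cf}(\kappa)$ --- I would either assume this or flag it, the present version not pursuing such refinements.

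The second step rules out any call of $P^{F}(t,\rho)$ at a time $\ge\kappa$. Suppose it made at least $\beta+1$ calls. The assertion ``there is a partial run of $P$ from $(t,\rho)$ whose first $\beta$ calls are answered by $(v_{\iota})_{\iota<\beta}$ and which ends at the first moment a further call is made'' is $\Sigma_{1}$ with parameters $t,\rho,\vec{w}$, all of which lie in $a$; it is true in $V$, and the witnessing run --- built recursively from $t,\rho,\vec{w}$ --- belongs to $L[a]$, so the assertion holds in $L[a]$ and hence, by $L_{\sigma_{1}(a)}[a]\prec_{\Sigma_{1}}L[a]$, already in $L_{\sigma_{1}(a)}[a]$. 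By determinism the witness there is the actual partial run, so it has length $<\sigma_{1}(a)$, i.e.\ the $(\beta+1)$-st call of $P^{F}(t,\rho)$ takes place at a time $<\sigma_{1}(a)<\kappa$ --- contradicting that only $\beta$ calls occur before time $\kappa$. Hence $P^{F}(t,\rho)$ makes at most $\beta$ calls, all before time $\kappa$. In particular it makes no call beyond those coded in $\vec{w}$, so the sub-claim proved inside Lemma~\ref{cardinality and reduction complexity} (applied with $\vec{v}:=\vec{w}$) yields that $P^{F}(t,\rho)$ halts in fewer than $\sigma_{1}(a)<\kappa$ steps or not at all. This is exactly the ``moreover'' assertion; and the first assertion follows, since then $P^{F}(t,\rho)$ cannot halt in more than $\kappa$ steps unless it makes $\kappa$ many calls before time $\kappa$.

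The step I expect to be the main obstacle is putting the reflection into precisely the right form: the $\Sigma_{1}$ statement whose truth is reflected must carry all of its parameters --- $t$, $\rho$ and the answers received so far --- inside the very set $a$ whose $\sigma_{1}$ is being controlled, which forces care both in lining up ``the first $\beta$ calls'' with the sequence $\vec{w}$ and in checking absoluteness of the witnessing partial run. The cardinal-arithmetic step of the first paragraph is routine for regular $\kappa$ but, as indicated, needs the cofinality caveat for singular $\kappa$.
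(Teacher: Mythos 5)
Your regular-$\kappa$ argument is essentially the paper's: bound $\text{card}(\text{tc})$ of the returned values by induction using the ``does not raise cardinalities'' hypothesis, conclude $\sigma_{1}(a)<\kappa$, and then use a $\Sigma_{1}$-reflection into $L_{\sigma_{1}(a)}[a]$ to show that any further call would already have to occur before $\sigma_{1}(a)<\kappa$; your bookkeeping (contradicting the count of calls before $\kappa$) differs only cosmetically from the paper's (contradicting the supremum $\delta$ of call times), and combining this with the sub-claim of Lemma~\ref{cardinality and reduction complexity} to get both assertions is exactly how the paper proceeds. One small inaccuracy: the queried set $x_{\iota}$ need not be ``a set of ordinals below some ordinal $<\kappa$'', since earlier oracle answers may have written cells of arbitrarily large index; what the induction actually gives (and all that is needed) is $\text{card}(\text{tc}(x_{\iota}))<\kappa$, with regularity of $\kappa$ used at limit stages, as in the paper.

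The genuine gap is the singular case, which you explicitly leave open (``assume this or flag it''). The corollary is stated for every uncountable cardinal $\kappa$, and when $\text{cf}(\kappa)\le\beta<\kappa$ the call times, and with them the cardinalities $\text{card}(\text{tc}(v_{\iota}))$, can be cofinal in $\kappa$, so your estimate $\text{card}(\text{tc}(a))<\kappa$ and hence $\sigma_{1}(a)<\kappa$ simply fails; no choice of wording rescues the reflection step there. The paper closes this case without any new cardinal arithmetic, by reducing to the regular case already proved: for the first assertion, write $\kappa$ as the supremum of an increasing sequence of regular cardinals $\kappa_{\iota}>\rho,\sup(t)$ and note that a computation halting in more than $\kappa$ steps halts in more than each $\kappa_{\iota}$ steps, hence makes at least $\kappa_{\iota}$ calls before time $\kappa_{\iota}$, hence $\kappa$ many before time $\kappa$; for the ``moreover'' assertion, if the order type $\xi<\kappa$ of calls made before time $\kappa$ is small, pick a regular cardinal $\lambda<\kappa$ above $\xi$ (and above $\rho,\sup(t)$) and apply the regular case at $\lambda$ to rule out calls at or after time $\lambda$, in particular at or after time $\kappa$. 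You would need to add this reduction (or an equivalent device) for your proof to establish the statement as given.
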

\begin{proof}
The computation clearly cannot make more than $\kappa$ many calls to $F$ before time $\kappa$; we thus only need to show that it cannot make less than that many calls.

First, let us assume that $\kappa$ is regular. Suppose for a contradiction that $P^{F}(t,\rho)$ makes $\gamma<\kappa$ many calls to $F$ before time $\kappa$. As in Lemma \ref{cardinality and reduction complexity}, let $(\tau_{\iota}:\iota<\gamma)$ be the times before $\kappa$ at which calls to $F$ were made during this computation and let $\vec{v}:=(v_{\iota}:\iota<\gamma)$ be the values returned by $F$ at these requests. By regularity of $\kappa$, we have $\delta:=\text{sup}\{\tau_{\iota}:\iota<\gamma\}<\kappa$. By induction, we have $\text{card}(\text{tc}(v_{\iota}))<\kappa$ for all $\iota<\gamma$: At successor levels, this follows from the assumption on $F$, while at limit levels $\delta$, this follows from the regularity of $\kappa$: If tape portions written on before time $\delta$ are always of length strictly smaller than $\kappa$, then the tape portion written on at time $\delta$, being bounded above by the union of the lengths of these tape portions, will, as a union of strictly less than $\kappa$ many ordinals strictly smaller than $\kappa$, also be strictly smaller than $\kappa$.

It follows that $\text{card}(\text{tc}(\vec{v}))<\kappa$, so $\vec{v}\in H_{\kappa}^{L[\vec{v}]}=L_{\kappa}[\vec{v}]$. Now, if no further calls to $F$ are made at all after time $\delta$ (including times $\geq\kappa$), then it follows from the remark above that $\sigma_{1}(\text{tc}(\{t\})\cup\vec{v}\cup(\rho+1))<\kappa$, so, by Lemma \ref{cardinality and reduction complexity},  the computation will halt in less than $\kappa$ many steps, a contradiction. Thus, in this case, there must be at least one call to $F$ taking place at time $\tau\geq\kappa$.

Let $\phi(t,\delta,\vec{v},\rho)$ be the statement ``There is a time strictly above $\delta$ at which $P^{F\rightarrow\vec{v}}(t,\rho)$ makes a call to $F$''. 
Then $\phi$ is $\Sigma_{1}$ (note that the value returned by $F$ at this call is irrelevant to the truth of this statement). Since this statement is true in $L[\text{tc}(\{\vec{v}\})]$, it must be true in $L_{\sigma_{1}(\text{tc}(\{t\})\cup\{\vec{v}\}\cup(\rho+1))}[\text{tc}(\vec{v})]$ (which contains all occuring parameters). But then, there must be a call to $F$ between times $\delta$ and $\sigma_{1}(\text{tc}(\{t\})\cup\text{tc}(\{\vec{v}\})\cup(\rho+1))<\kappa$, contradicting the definition of $\delta$.

Now, if $\kappa$ is singular, we can write it as a union of an increasing sequence $(\kappa_{\iota}:\iota<\gamma)$ of regular cardinals. Since $\kappa>\rho,\text{sup}(v)$, there is $\xi<\gamma$ such that $\kappa_{\iota}>\rho,\text{sup}(v)$ for $\iota\geq \xi$; without loss of generality, assume that $\xi=0$. If $P^{F}(t,\rho)$ halts in more than $\kappa$ many steps, then, in particular, for every $\iota<\gamma$, it halts in $>\kappa_{\iota}$ many steps and thus, before time $\kappa_{\iota}$, makes at least $\kappa_{\iota}$ many calls to $F$. Since this is true for all $\iota<\gamma$, it will make $\kappa$ many calls to $F$ before time $\kappa$. 

We now show the second claim. Suppose first that $P^{F}(s,\rho)$ makes actually less than $\text{cf}(\kappa)\leq\kappa$ many calls to $F$. From what we just showed, it follows that $P^{F}(s,\rho)$ will not halt. To see that there will be no calls to $F$ at or after time $\kappa$, let $\delta$ be the supremum of times at which such calls are made before time $\kappa$; by assumption, we have $\delta<\kappa$. Let $z$ be the computation state of $P^{F}(s,\rho)$ at time $\delta$. Consider a slightly modified version $Q$ of the program $P$ which terminates, when $P$ makes a call to $F$. Thus, $Q$ is an ordinary OTM-program that makes no calls to an extra function. Consequently, if $Q$ is started on the initial configuration $z$, it will either halt in less than $\sigma_{1}(z)<\kappa$ many steps or not at all. However, if $P$ made calls to $F$ after time $\delta$, then $Q$ would, by assumption, terminate at or after time $\kappa$, a contradiction. 

This implies the second claim immediately if $\kappa$ is regular. If $\kappa$ is singular, let $\xi$ be the order type of the calls made to $F$ in the computation of $P^{F}(s,\rho)$ before time $\kappa$, and pick a regular cardinal $\lambda\in(\text{cf}(\kappa),\kappa)$. Before time $\lambda$, the computation has made $\leq\xi<\lambda=\text{cf}(\lambda)$ many calls to $F$, so the above implies it will in fact not make any further calls to $F$ at or after time $\lambda$, and in particular not at or after time $\kappa$. 
\end{proof}

\section{Several Reduction Complexities}

We will now apply the above tools to various concrete cases.

\subsection{PowerCard and Pot}

We start by considering the question how many applications of Pot are necessary for computing PowerCard (that such a reduction is possible was observed in \cite{Ca2025}).

It is easy to see that PowerCard becomes effective when two uses of Pot are allowed: One for computing the power set, and another one for computing the power set of the power set, which can then be searched for the (code for a) well-ordering of minimal length (note that a well-ordering of any given set $x$ is implicit in its encoding as a set of ordinals). If only one application is allowed, the answer is less obvious. Indeed, the proof that one application of Pot is in general insufficient is considerably more technical.

\begin{lemma}{\label{powercard and pot}}
PowerCard$\leq_{\text{oW}}$Pot is independent of ZFC.
\end{lemma}
\begin{proof}
If $V=L$, then we have PowerCard$\leq_{OTM}$Pot: Given $\alpha\in\text{On}$, use the Pot-function to obtain $\mathfrak{P}(\alpha)$. Now enumerate $L$ until the first $L$-level $L_{\gamma}\ni\mathfrak{P}(\alpha)$ is found. The first such level will be $L_{\text{card}^{L}(\mathfrak{P}(\alpha))+1}$ (since $L_{\text{card}^{L}(\mathfrak{P}(\alpha))}$ is the first $L$-level that contains all constructible subsets of $\alpha$ and over this level, $\mathfrak{P}(\alpha)$ is definable), so $\gamma$ is guaranteed to be a successor ordinal $\gamma=\beta+1$ and we can return $\beta$.

To see that this does not follow from ZFC, let $M$ be a transitive model of ZFC which satisfies $2^{\aleph_{\omega\alpha+1}}=\aleph_{\omega\alpha+4}$ for all ordinals $\alpha$. Such a model can be obtained by Easton forcing (see \cite{Kunen}, S. 265). 
We will obtain a model of ZFC in which  PowerCard$\nleq_{\text{OTM}}$Pot by an iterated (class) forcing which successively sabotages all parameter-programs $(P,\rho)$ that might be candidates for witnessing the reduction.  To this end, let such a pair $(P_{k},\rho)$ be encoded as $\omega\rho+k$. This induces an ordering on these pairs; we will take care of these pairs in this induced ordering. 

We now explain how to obtain, starting in a transitive model $N$ of ZFC in which $2^{\aleph_{\omega\alpha+1}}=\aleph_{\omega\alpha+4}$ for all $\alpha\geq\omega(\omega\rho+k)$, a generic extension $N[G]$ of $N$ in which (i) $2^{\aleph_{\omega\alpha+1}}=\aleph_{\omega\alpha+4}$ for all $\beta>\omega(\omega\rho+k)$ and (ii) $(P_{k},\rho)$ does not witness  the ordinal Weihrauch reduction between PowerCard and Pot. Let $\alpha:=\omega\rho+k$. Let $F$ be a Pot-function in $N$. If $P_{k}^{F}(\aleph_{\omega\alpha+1},\rho)$ does not halt with output $\aleph_{\omega\alpha+4}^{N}$, we take the trivial generic extension $N[G]=N$. 

Otherwise, we need to modify $N$ to ensure that $(P_{k},\rho)$ no longer works. To this end, define $\mathbb{P}_{\rho,k}$ to be the standard forcing for collapsing $\aleph_{\omega\alpha+4}$ to $\aleph_{\omega\alpha+3}$, i.e., the set of  partial functions from $\aleph_{\omega\alpha+3}$ to $\aleph_{\omega\alpha+4}$ of size $<\aleph_{\omega\alpha+2}$. As a successor ordinal, $\aleph_{\omega\alpha+2}$ is regular, so, by \cite{Kunen}, Lemma 6.13, $\mathbb{P}_{\rho,k}$ is $\aleph_{\omega\alpha+2}$-closed for all $\rho$, $k$. Let $N[G]$ be a $\mathbb{P}_{\rho,k}$-generic extension of $N$. 
By \cite{Kunen}, Theorem 6.14, $N[G]$ contains no subsets of $\aleph_{\omega\alpha+1}$ that are not in $N$, so that $\mathfrak{P}^{N}(\aleph_{\omega\alpha+1})=\mathfrak{P}^{N[G]}(\aleph_{\omega\alpha+1})$. Moreover, the forcing will collapse $\aleph_{\omega\alpha+4}^{N}$ to $\aleph_{\omega\alpha+3}^{N}$.

Now, all elements of $\mathbb{P}_{\rho,k}$ have size $\leq\aleph_{\omega\alpha+1}$, so $\aleph_{\omega\alpha+4}\leq |\mathbb{P}_{\rho,k}|\leq \aleph_{\omega\alpha+4}^{\aleph_{\omega\alpha+1}}$. Using the Hausdorff formula (\cite{Jech}, p. 57), the fact that $\kappa^{\lambda}=2^{\lambda}$ for $\kappa\leq\lambda$ for infinite cardinals $\kappa$ and $\lambda$ (\cite{Jech}, Lemma 5.20) and the assumption that $2^{\aleph_{\omega\alpha+1}}=\aleph_{\omega\alpha+4}$, we compute $ \aleph_{\omega\alpha+4}^{\aleph_{\omega\alpha+1}}=\aleph_{\omega\alpha+3}^{\aleph_{\omega\alpha+1}}\cdot\aleph_{\omega\alpha+4}=\aleph_{\omega\alpha+2}^{\aleph_{\omega\alpha+1}}\cdot\aleph_{\omega\alpha+3}\aleph_{\omega\alpha+4}=\aleph_{\omega\alpha+1}^{\aleph_{\omega\alpha+1}}\aleph_{\omega\alpha+2}\aleph_{\omega\alpha+3}\aleph_{\omega\alpha+4} = 2^{\aleph_{\omega\alpha+1}}\aleph_{\omega\alpha+4}=\aleph_{\omega\alpha+4}\cdot\aleph_{\omega\alpha+4}=\aleph_{\omega\alpha+4}$. Since an antichain cannot have more elements than the whole partial order, $\mathbb{P}_{\rho,k}$ satisfies the $\aleph_{\omega\alpha+5}$-cc and thus does not change the cardinals above $\aleph_{\omega\alpha+4}$ (\cite{Kunen}, Lemma 6.9). Moreover, the continuum function above $\aleph_{\omega\alpha+4}$ is also not changed, for, if $\kappa\geq\aleph_{\omega\alpha+4}$, then the number of nice names for subsets of $\kappa$ is bounded by the number of maximal antichains in $\mathbb{P}_{\rho,k}$ to the power of $\kappa$,\footnote{Cf. \cite{Kunen}, p. 209f.}, which, by the above, is bounded above by $(\text{card}(\mathbb{P}_{\rho,k})^{\aleph_{\omega\alpha+4}})^{\kappa}\leq(\aleph_{\omega\alpha+4}^{\aleph_{\omega\alpha+4}})^{\kappa}=\aleph_{\omega\alpha+4}^{\kappa}\leq(2^{\omega\alpha+4})^{\kappa}=2^{\kappa}$.

We now show that, in $N[G]$, $(P_{k},\rho)$ does not oW-reduce PowerCard to Pot. So let $F$ be a Pot-function in $N[G]$. We consider $P_{k}^{F}(\aleph_{\omega\alpha+1},\rho)$. Prior to the application of $F$ in this computation, the cardinality of the number of computation steps is bounded by $\aleph_{\omega\alpha+1}$ (note that, as $\alpha$ is chosen so that $\aleph_{\omega\alpha+1}$ is guaranteed to be strictly larger than $\rho$, the computation will either halt in $<\aleph_{\omega\alpha+1}$ many steps or not at all). Moreover, the cardinality of the transitive closure of the input is also $\aleph_{\omega\alpha+1}$. Thus, the set $S$ to which $F$ is applied in the course of the computation also has the property that its transitive closure has cardinality $\leq\aleph_{\omega\alpha+1}$. Since such sets can be encoded as subsets of $\aleph_{\omega\alpha+1}$, and the forcing does not add any subsets of $\aleph_{\omega\alpha+1}$, we have $\mathfrak{P}^{N[G]}(S)=\mathfrak{P}^{N}(S)$. Thus, $F$ will return (a code for) the same set that we would have obtained had the computation instead been performed with a Pot-function in $N$. Now, by assumption, in $N$, the result of the computation was $\aleph_{\omega\alpha+4}^{N}$. However, in $N[G]$, this is not even a cardinal, and certainly not the cardinality of the power set of $\aleph_{\omega\alpha+1}$. Thus, $(P_{k},\rho)$ does not witness the oW-reduction of PowerCard to Pot in $N[G]$. 

We note that the step just described ensures that $(P_{k},\rho)$ gets the cardinality of the power set of $\aleph_{\omega\alpha+1}$ wrong. Since no new subsets of $\aleph_{\omega\alpha+1}$ are added by this step, it will preserve the fact that $(P_{l},\xi)$ gets the result for $\aleph_{\omega(\omega\xi+l)+1}$ wrong for all $(P_{l},\xi)$ that preceed $(P_{k},\rho)$ in the ordering defined above. 

This explains one step of the iteration. We use iterated forcing with Easton support (\cite{Jech}, p. 395) to iterate it through all ordinals; since the iteration is progressively closed, it follows from \cite{Reitz}, Lemma 117 and Theorem 98, that this iteration yields a model of ZFC.
\end{proof}

What we have just seen thus means that PowerCard$\leq_{\text{OTM}}^{2}$Pot, where it is consistent with ZFC that this bound is optimal (but it is also consistent with ZFC that it is not).

\subsection{Reductions to DecCard}

In this section, we consider how many ordinals need to be checked for being cardinals for finding cardinal succcessors and for computing cardinalities of ordinals. 

\begin{theorem}{\label{nextcard to deccard}}

    Let $f:\text{On}\rightarrow\text{On}$ be the (class) function $f(\alpha)=\text{card}(\alpha)^{+}+1$. Then $f$ is the reduction complexity of NextCard to DecCard.
\end{theorem}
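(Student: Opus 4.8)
The plan is to establish the two halves of optimality separately: first the upper bound $\mathrm{NextCard}\le_{\mathrm{OTM}}^{f}\mathrm{DecCard}$, then the fact that no $g$ with $\mathrm{NextCard}\le_{\mathrm{OTM}}^{g}\mathrm{DecCard}$ can undercut $f$ on a proper class. For the upper bound I would use the obvious algorithm: on input $\alpha$, query $\mathrm{DecCard}$ at the ordinals $\alpha+1,\alpha+2,\dots$ in turn and return the first one at which the answer is $1$; this ordinal is the least cardinal $>\alpha$, i.e. $\mathrm{card}(\alpha)^{+}=\mathrm{NextCard}(\alpha)$. The set of queried ordinals is exactly the interval $(\alpha,\mathrm{card}(\alpha)^{+}]$, and since $\mathrm{card}(\alpha)^{+}$ is additively indecomposable this interval has order type $\mathrm{card}(\alpha)^{+}+1$, so the order type of the set of call times is $f(\alpha)$. (If the convention is that $\mathrm{NextCard}$ sends finite $\alpha$ to $\omega$, one first skips finite ordinals; in any case the finitely many finite inputs are among the admissible exceptions.)

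For optimality, fix $(P,\rho)$ witnessing $\mathrm{NextCard}\le_{\mathrm{OTM}}^{g}\mathrm{DecCard}$, let $\alpha>\max(\rho,\omega)$, and write $\kappa:=\mathrm{card}(\alpha)^{+}=\mathrm{NextCard}(\alpha)$ and $C:=P^{\mathrm{DecCard}}(\alpha,\rho)$, which halts with output $\kappa$. The crucial claim is that $C$ makes at least one call to $\mathrm{DecCard}$ at some time $\ge\kappa$. Suppose not; then every call of $C$ occurs at a time $<\kappa$, so (by the OTM conventions) the head only ever reaches positions $<\kappa$ and every queried ordinal is $<\kappa$. For $\beta<\kappa$ we have $\mathrm{DecCard}(\beta)=1$ iff $\beta\in O$, where $O:=\{\beta<\kappa:\beta\text{ is a cardinal}\}$, because the cardinals below $\kappa$ are precisely those $\le\mathrm{card}(\alpha)$. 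Hence the \emph{plain} OTM-program $P''$ that simulates $P$ while answering each query $\beta$ by looking $\beta$ up in $O$ on its input tape reproduces the computation $C$ faithfully, so $P''$, on an input $t$ coding $(\alpha,O)$ and with parameter $\rho$, halts with output $\kappa$. But $\mathrm{tc}(\{t,\rho\})$ has cardinality $\le\mathrm{card}(\alpha)$, so by the Remark after the definition of $\sigma_{k}$ and the argument in the proof of Lemma~\ref{cardinality and reduction complexity} applied with no oracle calls, $P''(t,\rho)$ halts in fewer than $\sigma_{1}(\mathrm{tc}(\{t,\rho\}))\le\mathrm{card}(\alpha)^{+}=\kappa$ steps — contradicting that producing the output $\kappa$ requires at least $\kappa$ steps. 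This proves the claim (and in particular $C$ makes at least one call at all).

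With the claim in hand, $C$ halts in more than $\kappa$ steps, since it makes a call at a time $\ge\kappa$ and then runs on to its halt. Since $\mathrm{DecCard}$ does not raise cardinalities and $\kappa$ is an uncountable cardinal with $\kappa>\rho$ and $\kappa>\sup(t)$ (as $\kappa>\alpha\ge\sup(t)$ up to a finite amount), Corollary~\ref{cardinal time bound} yields that $C$ makes $\kappa$ many calls to $\mathrm{DecCard}$ before time $\kappa$; together with the call at time $\ge\kappa$, the order type of the set of all call times of $C$ is at least $\kappa+1=f(\alpha)$, so $g(\alpha)\ge f(\alpha)$. As this holds for every $\alpha>\max(\rho,\omega)$, the set $\{x:g(x)<f(x)\}$ is contained in $\max(\rho,\omega)+1$ and is therefore a set. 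Combined with the upper bound, $f$ is optimal, i.e. $f$ is the reduction complexity of $\mathrm{NextCard}$ to $\mathrm{DecCard}$.

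I expect the main obstacle to be the crucial claim, and within it two convention-dependent points borrowed from \cite{Ca2025Full}: that a call made before time $\kappa$ can only concern an ordinal $<\kappa$, and that outputting the ordinal $\kappa$ genuinely costs at least $\kappa$ steps; one also has to confirm that the simulation $P''$ halts and inherits the exact output of $C$ (a routine step-by-step OTM simulation respecting limit stages). Once the claim is secured, the remainder is a direct application of Corollary~\ref{cardinal time bound} together with the order-type bookkeeping already used for the upper bound.
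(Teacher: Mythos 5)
Your proof is correct and follows essentially the same route as the paper: the greedy upper-bound algorithm, Corollary~\ref{cardinal time bound} to force $\kappa$ many calls before time $\kappa$, and the observation that if no call occurred at or after time $\kappa$ the oracle answers could be hardcoded, making the computation halt in fewer than $\kappa$ steps via the $\sigma_{1}$-bound and hence too early to output $\kappa$. The only (harmless) difference is that you place the set of cardinals $\le\mathrm{card}(\alpha)$ on the input tape and invoke the Remark on $\sigma_{1}$ directly, where the paper pre-queries all $\xi\le\alpha$ and appeals to the second claim of the Corollary.
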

\begin{proof}

\begin{enumerate}
\item A reduction from NextCard to DecCard works as follows (see \cite{Ca2025Full}, Proposition 14): Given $\alpha\in\text{On}$, apply DecCard successively to all ordinals, starting with $\alpha+1$, until the answer is positive for some ordinal $\beta$; then return $\beta$. This works, and it clearly works within the required time bound.

\item Clearly, DecCard satisfies the assumption of Corollary \ref{cardinal time bound}. 
To see that $f$ is optimal, let $(P,\rho)$ witness the reduction, let $F$ be DecCard, and let $\alpha>\rho$ be infinite, but not a cardinal. Let $\kappa:=\text{card}(\alpha)^{+}$. Now assume for a contradiction that $P^{F}(\alpha,\rho)$ makes less than $\kappa+1$ many calls to $F$. This means that the number of calls to $F$ it makes is at most $\kappa$, and we already know from Corollary \ref{cardinal time bound} that that many calls are made prior to time $\kappa$. Thus, all calls to $F$ are made before time $\kappa$. But this means that all calls to $F$ evaluate $F$ at ordinals strictly less than $\kappa$; in particular, if $F$ is applied to an ordinal greater than $\alpha$, it always returns $0$. 

Let us modify $(P,\rho)$ a bit to work as follows: On input $\alpha$, it starts by successively calling $F$ for all $\xi\leq\alpha$ and storing the results on some extra tape. After that, $F$ is never used again; instead, we use the stored information to evaluate $F$ for ordinals $<\alpha$, while, if $F(\xi)$ is requested for some $\xi>\alpha$, we always return $0$. Using this, we can now simulate the computation of $P^{F}(\alpha,\rho)$ without actually using $F$ ever again. 

Now, this modified computation makes $\alpha+1<\kappa$ many calls to $F$ and thus, by Corollary \ref{cardinal time bound}, must halt in less than $\kappa$ many steps or will not halt at all. But, by assumption, it outputs $\kappa$, which means that it runs for at least $\kappa$ many steps before halting (for the read-write-head cannot reach cell $\kappa$ in fewer than $\kappa$ many steps), a contradiction. 
\end{enumerate}
\end{proof}

The naive approach to reducing OrdCard to DecCard explained in the proof of Proposition 14(4) in \cite{Ca2025Full} 
 takes $\alpha+1$ many steps in input $\alpha$ (in the worst case that $\alpha$ itself is a cardinal). A slight improvement would be to first check whether $\alpha$ is finite; if it is, return $\alpha$; and if it is not, start by applying DecCard to $\alpha$ and then to the ordinals strictly below $\alpha$, which would give us the new upper bound $\alpha$. One might conjecture that this is optimal. Surprisingly, it is consistent with ZFC that it is not at all:

\begin{proposition}{\label{in l ordcard to finite deccard}}
    If $V=L$, then OrdCard$\leq_{\text{OTM}}^{<\omega}$DecCard.
\end{proposition}
\begin{proof}
    Given an ordinal $\alpha$, the reduction works as follows: Use Koepke's algorithm to enumerate $L$ on an OTM (see, e.g., \cite{CarlBuch}, Lemma 3.5.3). Whenever a new $L$-level $L_{\beta}$ with $\beta>\alpha$ is produced, compute $\text{card}^{L_{\beta}}(\alpha)$ and store it on some extra tape. If that value is not already present on that tape, check it with DecCard. If the answer is positive, return that value; otherwise, continue with the next $\beta$. 

    This clearly yields the right result: If some $L$-level contains a bijection between some ordinal $\gamma$ and $\alpha$, and $\gamma$ is in fact an $L$-cardinal, then $\gamma$ is the $L$-cardinality of $\alpha$.

    Moreover, the sequence of values checked with DecCard is a strictly decreasing sequence of ordinals, and hence finite.
\end{proof}

We have thus seen that, in $L$, the reduction of OrdCard to DecCard takes only finitely many calls to DecCard. Provably in ZFC, this is the best one can hope for in any model; there cannot be a fixed finite bound of the required instances of DecCard:

 \begin{lemma}{\label{no finite guessing}}
 There is no parameter-program $(P,\rho)$ with the following property: There exists an $n\in\omega$ such that and a proper class $Q\subseteq\text{On}$ such that: 

 \begin{enumerate}
     \item For all $\alpha\in Q$, $P(\alpha,\rho)$ computes a finite set $q(\alpha)$ of ordinals with $|q(\alpha)|=n$ and $\text{card}(\alpha)\in q(\alpha)$. 
     \item For all ordinals $\alpha$, if $P(\alpha,\rho)$ computes a set of ordinals of cardinality $n$, then $\alpha\in Q$. 
 \end{enumerate}
 \end{lemma}
\begin{proof}
    Let $(P,\rho)$ be a parameter-program. We will show by induction on $n$ that $(P,\rho)$ does not have the above property for any natural number $n\geq 1$. 

    \begin{itemize}

    \item Let $n=1$. Assume for a contradiction that $(P,\rho)$ has the property in question. Consider the following program $P^{\prime}$: For a given ordinal $\alpha$, we start counting upwards, letting $P(\beta,\rho)$ run simultaneously for every $\beta$ we encounter (we imagine the tape split into $\text{On}$ many portions). By the assumption that $Q$ is a proper class (and thus unbounded in $\text{On}$), there are $\gamma>\beta>\alpha$ such that $\beta,\gamma\in Q$ and $\text{card}(\gamma)>\text{card}(\beta)\geq \text{card}(\alpha)^{+}$, which implies that $q(\beta)\cap q(\gamma)=\emptyset$. Thus, in the run of the computation, we are guaranteed to eventually find two ordinals $\gamma>\beta\geq\alpha$ for which $P(\beta,\rho)$ and $P(\gamma,\rho)$ terminate and output set $q(\beta)$, $q(\gamma)$ each containing a single ordinal, where $q(\beta)\cap q(\gamma)=\emptyset$. As soon as that happens, we halt. 

    Now, by definition of $(P,\rho)$, we have $\text{card}(\gamma)\in q(\gamma)$ and $\text{card}(\beta)\in q(\beta)$. Thus $q(\beta)\cap q(\gamma)\emptyset$ implies $\text{card}(\gamma)\neq \text{card}(\beta)$, and since $\gamma>\beta\geq \alpha$, it follows that $\text{card}(\gamma)\geq\text{card}(\alpha)^{+}$. 

    Now pick $\alpha>\rho$. Then the computation just described starts with the inputs $\alpha$ and $\rho$ and terminates with an output at least $\alpha^{+}$. In particular, it takes at least $\alpha^{+}$ many steps. But it is well-known that any halting OTM-computation in an infinite parameter $\xi$ must hold in less than $\xi^{+}$ many steps, a contradiction. 

   \item Now suppose the claim is true for $n$; we will show that it holds for $(n+1)$. Suppose for a contradiction that $(P,\rho)$ satisfies the above condition, computing sets of size $(n+1)$. 
   
   We first claim that, for any ordinal $\alpha>\rho$, the (finite) set $D_{\alpha}:=\bigcap_{\iota\geq\alpha,\iota\in Q}q(\iota)$ must be non-empty. 

   To see this, suppose otherwise, and pick $\alpha>\rho,\omega$ for which $D_{\alpha}=\emptyset$. This in particular implies that there is some $\delta\text{On}$ such that $\bigcap_{\alpha<\iota<\delta,\iota\in Q}q(\iota)=\emptyset$ (map any $\zeta\in q(\alpha)$ to the first ordinal $\delta(\zeta)\in Q$ such that $\zeta\notin q(\delta(\zeta))$, then take the maximum of these $\delta(\zeta)$ for all $\zeta\in q(\alpha)$). Starting with $\alpha$, we count upwards through the ordinals, letting $P(\beta,\rho)$ run simultaneously for any $\beta$ encountered on the way, until $q(\iota)$ for sufficiently many ordinals $\iota$ so that the intersection of all these $q(\iota)$ is empty. As soon as this happens, halt.  

   Now, since $\text{card}(\alpha)\in q(\iota)$ for any $\iota$ with $\text{card}(\iota)=\text{card}(\alpha)$, this means that we have counted up to some $\iota$ such that $\text{card}(\iota)>\text{card}(\alpha)$, thus running for at least $\alpha^{+}$ many steps in parameters $\alpha$ and $\rho$ (where $\rho<\alpha$), again a contradiction. 

   It follows that $D_{\rho+1}$ has at least one element, say, $\xi$. Now modify $(P,\rho)$ slightly to also use the parameter $\xi$ and, whenever $P(\beta,\rho)$ outputs a set of $(n+1)$ ordinals, delete $\xi$ from this set and return the result, while, if $P(\beta,\rho)$ outputs a set of $n$ or less ordinals, it returns the empty set. Then this new parameter-program computes, for each $\iota\in Q$ with $\iota>\text{max}\{\rho+1,\xi^{+}\}$, a set of $n$ ordinals guaranteed to contain $\text{card}(\iota)$ (for throwing out $\xi$ cannot remove the cardinality of $\iota$ from the set if that cardinality is greater than $\xi$). Moreover, it only outputs a set of $n$ ordinals when $(P,\rho)$ outputs a set of $(n+1)$ ordinals. But then, this modified program is exactly what is have ruled out by the inductive assumption about $n$. 
\end{itemize} 
\end{proof}

\begin{theorem}
    For any $n\in\omega$, OrdCard$\nleq^{n}_{\text{OTM}}$DecCard.
\end{theorem}

\begin{proof}
    Let $F$ be a DecCard-function, let $n\in\omega$, and suppose for a contradiction that $(P,\rho)$ is a parameter-program that witnesses the respective reduction. Let $F$ be a DecCard function. This means that, for any ordinal $\alpha$, there is some $s\in\{0,1\}^{n}$ such that $P^{s}(\alpha,\rho)\downarrow = \text{card}(\alpha)$. Let $\mathcal{C}\subseteq\mathfrak{P}(\{0,1\}^{n})$ be the set of subsets $T\subseteq\{0,1\}^{n}$ such that, for a proper class of ordinals $\alpha$, $P^{F\rightarrow t}(\alpha,\rho)$ terminates and outputs a single ordinal for all $t\in T$. Among the elements of $\mathcal{C}$, there is (at least) one of maximal cardinality $k$ (so $k\leq 2^{n}$); let $S$ be one of these. 
    By assumption on $(P,\rho)$, we have $S\neq\emptyset$ (for there must be at least one element in $\{0,1\}^{n}$ that represents the correct answers to the requests to $F$ cofinally often in $\text{On}$). 

    Let $\alpha>\rho$ be sufficiently large so that, for all $\beta\geq\alpha$, the set $\{t\in\{0,1\}^{n}:P^{F\rightarrow t}(\beta,\rho)\text{ halts and outputs a single ordinal}\}$ belongs to $\mathcal{C}$; in particular, these sets will all have cardinality at most $k$. Modify $(P,\rho)$ slightly to a parameter-program $(P^{\prime},\rho)$ that works as follows: On every input $\beta$, it simultaneously runs  $P^{F\rightarrow s}(\beta,\rho)$ for all $s\in S$. If all of these computations halt and output a single ordinal, we output the set $q(\beta)$ of these (at most) $k$ ordinals. Let $l\leq k$ be the maximal cardinal that occurs cofinally in $\text{On}$ often as the cardinality of such a set $q(\beta)$. Let $Q$ be the class of ordinals $\beta>\alpha$ such that $P^{F\rightarrow s}(\beta,\rho)$ halts and outputs a single ordinal for all $s\in S$ and such that $|q(\beta)|=l$. By assumption, $Q$ is a proper class. 

    Moreover, for $\beta\in Q$, we have $\text{card}(\beta)\in q(\beta)$. For if not, then this means that the sequence $s$ of correct answers to the requests that $(P^{\prime})^{F}(\beta,\rho)$ makes to $F$ is not contained in $S$, although $P^{F\rightarrow s}(\beta,\rho)$ terminates and outputs a single ordinal (for that output would, by definition of $(P,\rho)$, be $\text{card}(\beta)$). But that means that $P^{t}(\beta,\rho)$ actually halts and outputs a single ordinal for all $t\in S\cup\{s\}$ for an ordinal $\beta>\alpha$, contradicting the maximality of $S$.

    It follows that $(P^{\prime},\rho)$ and $Q$ satisfy the conditions ruled out by Lemma \ref{no finite guessing}; thus, $(P^{\prime},\rho)$, and hence $(P,\rho)$, cannot exist. 
\end{proof}

The algorithm described in the proof of Proposition \ref{in l ordcard to finite deccard} will work in general when $V=L[a]$ in the oracle $a$ when $a$ is a set of ordinals. 
If $V$ is very much unlike $L$, and there is no extra oracle, however, this will not be true:

\begin{proposition}
If $0^{\sharp}$ exists, then  OrdCard$\nleq_{\text{OTM}}^{<\omega}$DecCard.
\end{proposition}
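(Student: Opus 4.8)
The plan is to assume that a parameter-program $(P,\rho)$ witnesses $\text{OrdCard}\leq_{\text{OTM}}^{<\omega}\text{DecCard}$ and to derive a contradiction from the Silver indiscernibles supplied by $0^{\sharp}$. Let $B$ be the set of ordinals $\alpha$ on which $P^{\text{DecCard}}(\alpha,\rho)$ makes at least $\omega$ many calls to $\text{DecCard}$; by hypothesis $B$ is a set, for every $\alpha$ the computation $P^{\text{DecCard}}(\alpha,\rho)$ halts with output $\text{card}(\alpha)$, and for $\alpha\notin B$ it does so after finitely many queries. For such $\alpha$ write $\vec{v}(\alpha)\in 2^{<\omega}$ for the finite sequence of answers returned by $\text{DecCard}$ during the computation. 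The intuition is that these finitely many bits carry only finitely much information, so a pigeonhole argument over the countable set $2^{<\omega}$ will let us treat the computation, on a proper class of inputs, as an oracle-free $L$-computation with one fixed ``answer profile''; indiscernibility then forces $\text{card}$ to be constant on a proper class, which is absurd.

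First I would fix a finite increasing tuple $d_{1}<\dots<d_{k}$ of Silver indiscernibles and an $L$-Skolem term $g$ with $\rho=g^{L}(d_{1},\dots,d_{k})$, which is possible because $0^{\sharp}$ yields $L=\text{dcl}^{L}(I)$, where $I$ is the class of Silver indiscernibles. Then, for each $w\in 2^{<\omega}$, let $P_{w}$ be the oracle-free OTM-program obtained from $P$ by answering its first $|w|$ oracle calls according to $w$ (and diverging if it ever attempts a further one). Since OTM computations are absolute, a halting computation $P_{w}(x,\rho)$ has the same output in $L$ as in $V$, so that output is definable in $L$ from $x$ and $\rho$; hence it lies in $\text{dcl}^{L}(\{x,\rho\})\subseteq\text{dcl}^{L}(\{x,d_{1},\dots,d_{k}\})$. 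Consequently there is an $L$-Skolem term $G_{w}$, not depending on $x$, such that whenever $\alpha>\max(d_{k},\rho)$ is an indiscernible and $P_{w}(\alpha,\rho)$ halts, its output equals $G_{w}^{L}(d_{1},\dots,d_{k},\alpha)$ (the listed arguments being an increasing tuple of indiscernibles). In particular, for an indiscernible $\alpha\notin B$ with $\alpha>\max(d_{k},\rho)$, hard-wiring the true answers $\vec{v}(\alpha)$ reproduces the genuine computation, and therefore $\text{card}(\alpha)=G_{\vec{v}(\alpha)}^{L}(d_{1},\dots,d_{k},\alpha)$.

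Next I would run the pigeonhole. For every ordinal $\beta$ the interval $(\aleph_{\beta},\aleph_{\beta+1})$ contains a Silver indiscernible, as $I$ is unbounded in the regular cardinal $\aleph_{\beta+1}$, and no such indiscernible is a cardinal of $V$; hence the class $D_{0}$ of non-cardinal Silver indiscernibles is proper, and so is $D:=D_{0}\setminus\bigl(B\cup(\max(d_{k},\rho)+1)\bigr)$. The map $\alpha\mapsto\vec{v}(\alpha)$ sends $D$ into the countable set $2^{<\omega}$, so some fibre $C:=\{\alpha\in D:\vec{v}(\alpha)=w^{*}\}$ is a proper class. Each $\alpha\in C$ satisfies $\text{card}(\alpha)=G_{w^{*}}^{L}(d_{1},\dots,d_{k},\alpha)$ together with $\text{card}(\alpha)<\alpha$, the latter because $\alpha$ is not a cardinal. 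I now invoke the standard fact about Silver indiscernibles: if an $L$-Skolem term applied to indiscernibles $\gamma_{1}<\dots<\gamma_{m}$ produces an ordinal $<\gamma_{m}$, its value is unchanged when $\gamma_{m}$ is replaced by any indiscernible $>\gamma_{m-1}$ (see, e.g., \cite{Jech}). With $\gamma_{m}=\alpha$ this shows that $G_{w^{*}}^{L}(d_{1},\dots,d_{k},\alpha)$ equals one fixed ordinal $\mu$ for all $\alpha\in C$; but then $\mu\leq\alpha<\mu^{+}$ for every $\alpha\in C$, so $C$ is a set --- contradicting that it is a proper class.

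The step I expect to be the main obstacle is the second paragraph: confirming carefully that hard-wiring finitely many $\text{DecCard}$-answers really turns the output of $P$ into an $L$-Skolem function of the input together with the finitely many indiscernibles naming $\rho$ --- in particular that the relevant OTM computation is absolute enough for its output to land in $\text{dcl}^{L}(\{x,\rho\})$ rather than in a strictly larger hull. Once this ``Skolemization of the oracle-free part'' is in place, combining the $2^{<\omega}$-pigeonhole with the indiscernibility fact is routine. The argument genuinely uses $0^{\sharp}$: if $V=L$ there are no Silver indiscernibles, which is consistent with the preceding proposition; and, unlike the lower bounds obtained above, it does not go through Corollary~\ref{cardinal time bound} --- only the fact that the reduction halts using finitely many queries is needed.
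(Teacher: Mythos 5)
Your proof is correct, but it takes a noticeably different route from the paper's. The common core is the first move: since only finitely many DecCard-queries are made, the answers can be hard-wired into an oracle-free program whose halting computation is absolute between $V$ and $L$, so the output becomes an $L$-definable function of the input and the parameter. From there the paper argues \emph{locally}: it fixes a single non-cardinal Silver indiscernible $\xi$ as input, writes the absolute statement ``the hard-wired computation outputs $\mathrm{card}(\xi)$'' as a formula whose parameters (the finitely many cardinals revealed plus $\mathrm{card}(\xi)$) are indiscernibles, and uses plain indiscernibility to swap $\mathrm{card}(\xi)$ for another indiscernible in the same relative position, so the one computation would converge to two different values. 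You argue \emph{globally}: you vary the input over the proper class of non-cardinal indiscernibles, pigeonhole the countably many possible answer strings in $2^{<\omega}$, and then apply remarkability (a term value below its topmost indiscernible argument is independent of that argument) to conclude that $\mathrm{card}$ would be constant on a proper class, which bounds that class by $\mu^{+}$. Your version costs a pigeonhole step and the remarkability lemma, but it buys two things the paper's write-up glosses over: the parameter $\rho$ is handled cleanly by expressing it as a Skolem term in fixed indiscernibles and varying only the topmost argument (the paper's swap tacitly treats $\rho$, and any finite/countable cardinals returned by the oracle, as unproblematic parameters), and the ``all but set many'' clause in the definition of $\leq_{\text{OTM}}^{<\omega}$ is respected explicitly via your set $B$, whereas the paper picks a single $\xi$ without noting that it must be chosen outside the exceptional set. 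Both arguments use the same standard facts about Silver indiscernibles (unboundedness in every uncountable cardinal, definability of $L$ from $I$), and neither needs Corollary~\ref{cardinal time bound}.
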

\begin{proof}
If $0^{\sharp}$ exists, then the $V$-cardinals are order indiscernibles for $L$.\footnote{See, e.g., \cite{Jech}, Theorem 18.1(ii).} Assume for a contradiction that $(P,\rho)$ is parameter-program that witnesses OrdCard$\leq_{\text{OTM}}^{<\omega}$DecCard. Pick a Silver indiscernible $\xi>\rho,\aleph_1$ which is not a $V$-cardinal, and let $F$ be DecCard. By assumption, $P^{F}(\xi,\rho)$ computes $\text{card}(\xi)$ and uses $F$ only finitely often along the way.  This will in particular reveal only finitely many cardinals; let us say that $s:=(\kappa_{i}:i<n)$ is the sequence of cardinals found along the way, where $n\in\omega$. Then we can view the computation as running relative to a function that returns $1$ on elements of $s$ and $0$ everywhere else; thus, the fact that $P^{F}(\xi,\rho)\downarrow=\text{card}(\xi)$ can be expressed as a first-order formula $\phi(\kappa_{0},...,\kappa_{n-1},\text{card}(\xi))$. Due to absoluteness of computations, this formula will be absolute between $L$ and $V$. However, since $\text{card}(\xi)$ is an uncountable cardinal, the class of Silver indiscernibles is unbounded in $\text{card}(\xi)$; thus, there will be a Silver indiscernible $\beta$ such that $\text{card}(\xi)>\beta>\text{max}(\{\kappa\in s:\kappa<\text{card}(\xi)\})$. It follows that $L\models\phi(\kappa_{0},...,\kappa_{n-1},\beta)$; but the computation $P^{F}(\xi,\rho)$ cannot halt with two different outputs, a contradiction.
\end{proof}

\begin{remark}
We note that, in $L[0^{\sharp}]$, regardless of the input, we can get away with $<\aleph_{\omega}+\omega$ calls to DecCard. This works by first running through the first $\aleph_{\omega}+1$ ordinals, checking all of them with DecCard until $\omega+1$ many cardinals have been found (which will be the cardinals $\aleph_{0},\aleph_{1},...,\aleph_{\omega}$). Then $(\aleph_{i}:i<\omega)$ is an infinite set of Silver indiscernibles, and we have $L_{\aleph_{\omega}}\prec L$. Thus, by computing a code for $L_{\aleph_{\omega}}$ from $\aleph_{\omega}$ and evaluating formulas with parameter $\aleph_{1},\aleph_{2},...$ in $L_{\aleph_{\omega}}$, we can compute $0^{\sharp}$. But then, as described above, relative to $0^{\sharp}$, we only need finitely many extra calls to DecCard to compute the cardinality of any given ordinal. Thus, in $L[0^{\sharp}]$, we still have a constant upper bound to the number of necessary calls.

Note that this construction also works relative to iterated sharps: For example, in order to evaluate OrdCard in $L[(0^{\sharp})^{\sharp}]$, we first determine $\aleph_{1},...,\aleph_{\omega}$, as above; then, on the basis of this, we compute $0^{\sharp}$ by evaluating the truth predicate in $L_{\aleph_{\omega}}$; and then, we compute $(0^{\sharp})^{\sharp}$ by evaluating the truth predicate in $L_{\aleph_{\omega}}[0^{\sharp}]$. 

We do not know whether this bound is optimal, but conjecture that it is not.

\end{remark}

At least consistently, there need not be a constant bound on the reduction complexity of OrdCard to DecCard:

\begin{theorem}{\label{no constant complexity bound}}
There is a class forcing extension of $L$ which satisfies ZFC such that OrdCard$\nleq_{\text{OTM}}^{<\alpha}$DecCard for every ordinal $\alpha$.
\end{theorem}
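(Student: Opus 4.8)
The plan is to iterate the basic sabotage construction from Lemma \ref{powercard and pot} not once per parameter-program, but infinitely often \emph{per} parameter-program, so that any candidate $(P_k,\rho)$ is forced to use at least $\alpha$ many calls to DecCard on some input for every prescribed ordinal $\alpha$. The underlying mechanism is the same as in Corollary \ref{cardinal time bound}: if on input $\xi$ the program makes fewer than $\kappa$ many calls to an effectivizer $F$ of DecCard, where $\kappa>\rho,\xi$ is a suitable cardinal, then all of those calls are made before time $\kappa$, hence $F$ is only queried at ordinals below $\kappa$; consequently, if we can arrange for $F$ to give a ``wrong'' answer on $\xi$ whenever it is only read below $\kappa$, we kill $(P_k,\rho)$ as a witness for a complexity bound $<\kappa$. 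Since DecCard is a function of the \emph{model}, changing which ordinals are cardinals is exactly what collapsing forcing does; and since collapsing $\aleph_{\omega\delta+4}$ to $\aleph_{\omega\delta+3}$ (for appropriate $\delta$) is sufficiently closed, it adds no new subsets of $\aleph_{\omega\delta+1}$, so the prefix of the computation below the relevant cardinal is unchanged.

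First I would set up the bookkeeping. Start in $L$ (or a suitable Easton extension of $L$ in which $2^{\aleph_{\omega\delta+1}}=\aleph_{\omega\delta+4}$ for all $\delta$, exactly as in Lemma \ref{powercard and pot}, so that the cardinal-arithmetic computation showing the collapse is $\aleph_{\omega\delta+5}$-cc and preserves GCH-type behaviour above goes through verbatim). Fix a pairing of $\omega$ with the class of pairs $(k,\rho)$, say $(P_k,\rho)\mapsto\omega\rho+k$, and also fix, inside the $\delta$-indexed blocks of $\aleph$'s, a way to reserve, for the $e$-th program-pair and the $n$-th ``round'', a block $B_{e,n}$ of the form $[\omega\delta_{e,n},\omega\delta_{e,n}+5)$ with the $\delta_{e,n}$ chosen strictly increasing in the lexicographic order of $(e,n)$ and large enough that $\aleph_{\omega\delta_{e,n}+1}$ exceeds the parameter $\rho$ of the $e$-th pair. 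The iteration, with Easton support, performs at stage $(e,n)$ the following: letting $\xi_{e,n}:=\aleph_{\omega\delta_{e,n}+1}$ and $\kappa_{e,n}:=\aleph_{\omega\delta_{e,n}+4}$ (a non-cardinal target after collapsing), check whether, in the current model, $P_k^{F}(\xi_{e,n},\rho)$ halts with output the correct value $\mathrm{card}(\xi_{e,n})$ \emph{using fewer than $\kappa_{e,n}$ many calls to $F$}; if it does not, force trivially; if it does, collapse $\kappa_{e,n}=\aleph_{\omega\delta_{e,n}+4}$ to $\aleph_{\omega\delta_{e,n}+3}$ with the closed collapse $\mathbb{P}_{e,n}$ of partial functions of size $<\aleph_{\omega\delta_{e,n}+2}$. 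Exactly as in Lemma \ref{powercard and pot}, this adds no subsets of $\xi_{e,n}$, is $\aleph_{\omega\delta_{e,n}+5}$-cc, changes no cardinals and no continuum values outside the block, and in particular leaves untouched the sabotage performed at all earlier stages. Since the iteration is progressively (Easton-)closed, it preserves ZFC by \cite{Reitz}, Lemma 117 and Theorem 98, as in the earlier lemma.

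Next I would verify that the extension works. Suppose toward a contradiction that in the final model $(P_k,\rho)$ witnesses $\mathrm{OrdCard}\leq^{<\alpha}_{\mathrm{OTM}}\mathrm{DecCard}$ for some fixed ordinal $\alpha$. Pick $n$ with $\kappa_{e,n}>\alpha$ (possible since the $\delta_{e,n}$ are cofinal) and $\kappa_{e,n}>\rho$; write $\kappa:=\kappa_{e,n}$, $\xi:=\xi_{e,n}$. By hypothesis $P_k^{F}(\xi,\rho)$ makes $<\alpha<\kappa$ many calls to $F$, so by Corollary \ref{cardinal time bound} all of these calls occur before time $\kappa$, hence $F$ is only evaluated at ordinals $<\kappa$. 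Now the point is that the collapse $\mathbb{P}_{e,n}$ \emph{was} performed at stage $(e,n)$ --- because at that stage the then-current model also had this property. Here one must be a little careful: what stage $(e,n)$ checks is a statement about the model at that stage, whereas we need it about the final model. But since no later stage adds subsets of $\xi$ and DecCard restricted to ordinals $<\kappa'$ for $\kappa'\le\xi$ is determined by the subsets of $\xi'$'s present, and more importantly since the closed collapse at stage $(e,n)$ adds no new subsets of $\xi$, the initial segment of the computation $P_k^{F}(\xi,\rho)$ below time $\kappa$ is the same whether $F$ is the DecCard of the stage-$(e,n)$ model or of the final model --- here I would invoke Lemma \ref{cardinality and reduction complexity}: the run of the computation below $\kappa$ depends only on the answers of $F$ at ordinals below $\kappa$, and those answers agree because no relevant subsets were added. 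Hence, if the final model's $P_k^F(\xi,\rho)$ halts with the correct output in $<\kappa$ many steps, the stage-$(e,n)$ model's computation did too, so stage $(e,n)$ was \emph{not} trivial, i.e.\ $\kappa=\aleph_{\omega\delta_{e,n}+4}$ was genuinely collapsed; but then in the final model $\kappa$ is not a cardinal and in particular is not $\mathrm{card}(\xi)$, while the computation (which is the same as the stage-$(e,n)$ one up through time $\kappa$ and, having made no further calls, beyond) still outputs $\kappa$ --- contradiction.

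The main obstacle, and the step that needs the most care, is exactly this synchronisation between ``what stage $(e,n)$ tests'' and ``what is true in the final model''. Two subtleties arise. First, ZFC cannot literally quantify over the class of all parameter-programs and their runs relative to a class function $F$, so the iteration must be organised so that at stage $(e,n)$ only a \emph{set-sized} piece of information is needed; this is fine because the run of $P_k^{F}(\xi,\rho)$ up to time $\kappa$, together with the verdict ``halts with correct output before time $\kappa$'', is decided inside, say, $L_{\kappa^{+}}[\text{tc}(\vec v)]$ with $\vec v$ the (set-sized) sequence of oracle answers below $\kappa$ (by Lemma \ref{cardinality and reduction complexity} the halting time is $<\sigma_1(\cdots)\le\kappa^+$), so the test is a set-theoretic statement evaluated in a set. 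Second, one must confirm that \emph{later} stages of the iteration genuinely do not disturb the sabotage at stage $(e,n)$: later collapses act on blocks $B_{e',n'}$ with $\omega\delta_{e',n'}$ strictly above $\omega\delta_{e,n}+5$, each is closed enough to add no subsets of its own $\xi_{e',n'}>\xi$, hence a fortiori none below $\xi$, and each is chain-condition-small enough to change no cardinals outside its block --- so $\aleph_{\omega\delta_{e,n}+4}$ stays collapsed and $\mathfrak P(\xi)$ stays fixed throughout the rest of the iteration and at the limit (using that Easton-support iterations of this shape add no new subsets of $\xi$ at the limit, again as invoked in Lemma \ref{powercard and pot}). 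Once these two points are nailed down the argument closes, and it simultaneously shows that \emph{no} constant --- indeed no ordinal --- bounds the reduction complexity of OrdCard to DecCard in the resulting model.
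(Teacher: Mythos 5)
There is a genuine gap, and it is fatal: your sabotage step does not actually make the program wrong. You transplant the PowerCard argument of Lemma \ref{powercard and pot}: run $P_k^F$ on $\xi=\aleph_{\omega\delta+1}$ and collapse $\kappa=\aleph_{\omega\delta+4}$ to $\aleph_{\omega\delta+3}$. In the PowerCard case that works because the \emph{correct output} on input $\aleph_{\omega\alpha+1}$ is $\aleph_{\omega\alpha+4}$ (by the Easton arrangement), and the collapse changes the correct answer while leaving the computation (which only sees $\mathfrak{P}(\aleph_{\omega\alpha+1})$) untouched. For OrdCard the situation is reversed: the correct output on your input $\xi=\aleph_{\omega\delta+1}$ is $\operatorname{card}(\xi)=\xi$ itself, and this is \emph{unchanged} by collapsing $\aleph_{\omega\delta+4}$, since $\xi$ stays a cardinal. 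You correctly argue that the computation is unchanged (its queries lie below $\kappa$, and DecCard below $\kappa$ is unaffected), but then it still outputs $\xi$, which is still correct with still fewer than $\alpha$ calls --- no contradiction. Your final step, ``the computation still outputs $\kappa$'', is simply not what the computation outputs; that sentence belongs to the PowerCard proof, not to this one. (Relatedly, the Easton preparation $2^{\aleph_{\omega\delta+1}}=\aleph_{\omega\delta+4}$ plays no role for DecCard/OrdCard, which is a sign the wrong template is being reused. A secondary issue: with only $\omega$ many rounds per program, the cardinals $\kappa_{e,n}$ for fixed $e$ are bounded, so you cannot in general ``pick $n$ with $\kappa_{e,n}>\alpha$''.)

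To sabotage OrdCard one must run the program on the very ordinal whose cardinality the forcing will change, and then one must control the fact that DecCard \emph{itself} changes at (and possibly near) that ordinal --- e.g.\ the program may well query DecCard at its own input first, and that answer flips after the collapse. The paper's proof handles this by choosing, for each triple $(P,\rho,\alpha)$, a limit cardinal $\kappa(t)$ of cofinality greater than $\alpha$ (and than all earlier data), running $P$ on input $\kappa(t)$ against a ``guess'' oracle $G$ that already answers as if $\kappa(t)$ were collapsed, letting $\delta<\kappa(t)$ bound the ordinals below $\kappa(t)$ touched by the first $\alpha$ calls (here the cofinality hypothesis is essential), and only then collapsing $\kappa(t)$ to a cardinal $\lambda\in(\max(\delta,\lambda(t)^{+}),\kappa(t))$ chosen \emph{different from the program's output}. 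After this collapse the true DecCard agrees with $G$ on every ordinal the computation actually queries, so the run is unchanged, while $\operatorname{card}(\kappa(t))=\lambda$ now differs from the output (or the run diverges, or it already used $\alpha$ calls). None of this mechanism --- input equal to the collapsed cardinal, the anticipating oracle $G$, the cofinality condition producing $\delta$, and the post hoc choice of the collapse target $\lambda$ to dodge the output --- is present in your proposal, and without it the argument does not close.
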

\begin{proof}

For each triple $(P,\rho,\alpha)$ consisting of a parameter-program $(P,\rho)$ and an ordinal $\alpha$, we sabotage the claim that $(P,\rho)$ reduces OrdCard to DecCard with complexity bounded above by $\alpha$. Let $T$ be the class of all such triples $(P,\rho,\alpha)$, and let $\leq^{T}$ be the ordering on $T$ induced by Cantor's pairing function (used twice to encode triples); this is a linear ordering of order type $\text{On}$.

With each triple $t=(P,\rho,\alpha)\in T$, we associate an ordinal $\kappa(t)$ so that $\kappa(t)$ is an uncountable limit cardinal in $L$ and such that $$\text{cf}(\kappa(t))>\lambda(t):=(\text{sup}(\{\kappa(t^{\prime})^{+}:t^{\prime}<^{T}t\})\cup\rho+1\cup\alpha+1)^{+}.$$

Note that, by Corollary \ref{cardinal time bound}, if $F$ is DecCard (which is clearly definable and does not raise cardinals, as it only outputs $0$ or $1$), if $P^{F}(\kappa(t),\rho)$ stopped after at least $\kappa(t)^{+}$ many steps, it would have made at least $\kappa(t)^{+}>\alpha$ many calls to $F$ at time $\kappa(t)^{+}$, and thus have violated the supposed upper bound $\alpha$ to the number of these calls. Thus, we only need to take care of cases in which $P^{F}(\kappa(t),\rho)$ halts in less than $\kappa(t)^{+}$ many steps -- in all other cases, it is either guaranteed to make more calls to the extra function, or it is guaranteed not to halt.

The desired target model will arise as an iterated forcing of class length $\text{On}$ using Easton support. Suppose that, for some  $t\in T$, an intermediate model $M_{<t}$ has been obtained that takes care of all $t^{\prime}<t=(P,\rho,\alpha)\in T$. The forcing will be set up in a way that taking care of $t$ does not change cardinals or cofinalities $\leq\kappa(t^{\prime})^{+}$ for all $t^{\prime}<t$. ($\ast$) 
It will, moreover, not collapse cardinals greater than $\kappa(t)$. ($\ast\ast$) 
Thus, in particular, if $\kappa$ satisfies the definition of $\kappa(t)$ in the ground model, it will continue to do so in the generic extension: All forcings for $t^{\prime}<^{T}t$ will leave $\lambda(t)$ intact. 

Let $F$ be DecCard in $M$. Our aim is to ensure that, in the target model, the cardinality of $\kappa(t)$ is not computed correctly with less than $\alpha$ many calls to $F$. We define $G:\text{On}\rightarrow\text{On}$ by $$G(\xi):=\begin{cases}0\text{, if }\xi=\kappa(t)\\F(\xi)\text{, otherwise}\end{cases}.$$

We now consider the computation of $P^{G}(\kappa(t),\rho)$. We distinguish the following cases:

\begin{enumerate}
\item Before time $\kappa(t)^{+}$, the computation $P^{G}(\kappa(t),\rho)$ contains fewer than $\alpha$ many calls to $G$ and has not halted.
\item Before time $\kappa(t)^{+}$, the computation $P^{G}(\kappa(t),\rho)$ contains fewer than $\alpha$ many calls to $G$ and it has halted.
\item The computation $P^{G}(\kappa(t),\rho)$ has made at least $\alpha$ many calls to $G$ before time $\kappa(t)^{+}$. 
\end{enumerate}

Note that, since $\kappa(t)>\rho$ and because $F$ does not raise cardinals, the program can write on at most one extra cell on each tape per time step, so that, whatever it can write in less than $\kappa(t)^{+}$ many steps in inputs $\rho$ and $\kappa(t)$ will have cardinality less than $\kappa(t)^{+}$; in particular, all calls to $G$ the computation will make before time $\kappa(t)^{+}$ will concern ordinals less than $\kappa(t)^{+}$. 

We now let \begin{multline}\delta:=\text{sup}(\{\iota<\kappa(t):\text{Among the first }\alpha\text{ calls to }G\text{ before time }\kappa(t)^{+}\\\text{ in the computation }P^{G}(\kappa(t),\rho)\text{ one concerns }\iota\}).\end{multline}

Since $\text{cf}(\kappa(t))>\alpha$ by definition, we have $\delta<\kappa(t)$. We now let $\lambda$ be the smallest $M_{<t}$-cardinal in $((\text{max}(\delta,\lambda(t)^{+}),\kappa(t))$ which is not equal to the output of $P^{G}(\kappa(t),\rho)$. If there is no such output, this is trivial; if there is, $\kappa(t)$ being a limit cardinal guarantees the existence of $\lambda$. 

We now force with the Levy collapse forcing $\text{Coll}(\kappa(t),\lambda)$, which consists of the partial functions from $\kappa(t)$ to $\lambda$ of cardinality $<\lambda$, ordered by $\supseteq$. 

Since this is $\lambda$-closed, no cardinals below $\lambda$ will be changed; thus, ($\ast$) is satisfied. Moreover, it satisfies the $\kappa^{+}$-cc, 
and thus does not collapse cardinals $\geq\kappa(t)^{+}$, so that ($\ast\ast$) is satisfied as well. 

Let $M_{t}$ be the generic extension, and let $F^{\prime}$ be DecCard in $M_{t}$. We now show for each of the three cases that, in $M_{t}$, it is not true that $P^{F^{\prime}}(\kappa(t),\rho)\downarrow = \text{card}(\kappa(t))$ with less than $\alpha$ many calls to $F^{\prime}$. 

Let $\gamma$ be the time before $\kappa(t)^{+}$ at which the $\alpha$-th call to $G$ takes place in the computation of $P^{G}(\kappa(t),\rho)$ if such a time exists, and $\gamma:=\kappa(t)^{+}$, otherwise. 
The crucial observation is that, as seen above, (i) this forcing does not change cardinals $\leq\delta$, (ii) no calls concerning ordinals in $(\delta,\kappa(t))$ are made to $G$ before time 
$\gamma$ in the computation of $P^{G}(\kappa(t),\rho)$, (iii) we have $F^{\prime}(\kappa(t))=0$ since $\kappa(t)$ is collapsed and (iv) both $F^{\prime}$ and $G$ return $0$ for every ordinal in $(\kappa(t),\kappa(t)^{+})$. Thus, $F^{\prime}$ will return the same value as $G$ for any ordinal to which it is applied in the computation before time $\gamma$. 
Consequently, the computation of $P^{G}(\kappa(t),\rho)$ and $P^{F^{\prime}}(\kappa(t),\rho)$ in fact agree up to time $\gamma$. 

Now, in case (1), we know that $P^{G}(\kappa(t),\rho)$ will not halt and it will make no calls to $G$ at or after time $\kappa(t)^{+}$. Thus, the computations $P^{G}(\kappa(t),\rho)$ and $P^{F^{\prime}}(\kappa(t),\rho)$ in fact agree entirely in this case, so $P^{F^{\prime}}(\kappa(t),\rho)\uparrow$. 

In case (2), the output of $P^{F^{\prime}}(\kappa(t),\rho)$ is the same as that of $P^{G}(\kappa(t),\rho)$ (since the computations agree). But in $M_{t}$, we have $\text{card}(\kappa(t))=\lambda$, which, by choice of $\lambda$, is not the output of $P^{G}(\kappa(t),\rho)$. 

In case (3), $P^{F^{\prime}}$ has made at least $\alpha$ many calls to $F^{\prime}$ before time $\gamma$ 
and thus does not adhere to the supposed bound on the number of calls. 

This sequence of forcings is progressively closed. Thus, again by Reitz \cite{Reitz} Lemma 117 and Theorem 98, the iteration yields a model of ZFC. 
\end{proof}

\begin{remark}
    The above proof only invokes rather general properties of DecCard; it thus applies at least to every class function instead of DecCard mapping to $\{0,1\}$, and in fact to a considerably wider range of class functions.
\end{remark}

The following lemma roughly says that many small ordinals must be considered in a computation relative to DecCard before a large one is considered.

\begin{lemma}{\label{many requests precede high requests}}
Let $(P,\rho)$ be a parameter-program, let $H$ be a DecCard function, $\beta\in\text{On}$, and let $\kappa>\rho,\beta$ be a cardinal. If $P^{H}(\beta,\rho)$ makes a request to $H$ concerning some ordinal $\alpha\geq\kappa^{+}$, then it has made at least $\kappa^{+}$ requests to $H$ before.
\end{lemma}
\begin{proof}
    Suppose otherwise. Let $\xi$ be the order type of the set of times at which calls to $H$ were made before $H(\alpha)$ was requested; so we assume $\xi<\kappa^{+}$. By reflection, let $\nu>\kappa^{+}$ be such that $V_{\nu}$ is a $\Sigma_{2}$-elementary submodel of $V$. In particular, an ordinal $\eta<\nu$ is a cardinal if and only if it is a cardinal in $V_{\nu}$. Let $\mathcal{H}$ be the elementary hull of 
    $(\kappa+1)\cup (\xi+1)$ in $V_{\nu}$, and let $\overline{\mathcal{H}}$ be its transitive collapse. We have $\text{card}(\mathcal{H})=\kappa$, so the ordinal height of $\overline{\mathcal{H}}$ is strictly less than $\kappa^{+}$. Moreover, the collapsing map will leave $\xi$ and $\kappa$ unchanged. Moreover, by elementarity, an ordinal is a cardinal in $\overline{\mathcal{H}}$ if and only if it is a cardinal in $V$. Thus the computation of $P^{H}(\beta,\rho)$ will be absolute between $\overline{\mathcal{H}}$ and $V$ up to the time where the $\xi$-th call to $H$ takes place. Again by elementarity, $\overline{\mathcal{H}}$ believes that there this computation makes a $\xi$-th call to $H$. But that call would have concern $\alpha$, which cannot be an element of $\overline{\mathcal{H}}$, a contradiction. 
\end{proof}

\begin{remark}
    Considering the \textit{first} call to some ordinal $\alpha\geq\kappa^{+}$ in the last lemma, we thus obtain that at least $\kappa^{+}$ many requests to $H$ have been made that concern ordinals strictly smaller than $\kappa^{+}$ prior to the first call to a larger ordinal. 
\end{remark}


We have already seen that the identity function is an upper bound on the reduction complexity of OrdCard to DecCard, as you can always start by testing the given ordinal $\alpha$ and then all ordinals below it successively. It is then natural to ask whether this is optimal: 
Is there (provably in ZFC) a reduction of OrdCard to DecCard which provably in ZFC improves on the naive approach explained above in the sense that, for some $\alpha\in\text{On}$, the number of calls required on input $\xi>\alpha$ will be strictly smaller than $\xi$? As the next theorem shows, this is not the case; it is thus consistent relative to ZFC that the upper bound $\text{id}$ is in fact optimal.\footnote{As a result, this is of course a strengthening of Theorem \ref{no constant complexity bound}. However, the proof method here is much more specific to reductions to DecCard, which is why Theorem \ref{no constant complexity bound} -- or rather its proof -- is of independent interest.}

\begin{theorem}{\label{ordcard to deccard not faster than id}}
    It is consistent relative to ZFC that any reduction of OrdCard to DecCard will, for unboundedly many $\alpha$, require at least $\alpha$ many calls to DecCard, i.e., that  $\text{OrdCard}\nleq_{\text{OTM}}^{<\text{id}}\text{DecCard}$. 
\end{theorem}
\begin{proof}
As explained above, we identify parameter-programs with ordinals. For $\iota\in\text{On}$, we write $\kappa_{\iota}$ for the $\iota$-th fixed point of the $\aleph$ function in $L$. We will avoid complications for our construction that would arise from such fixed points that are themselves limits of such fixed points and thus only consider successors in this sequence; accordingly, we write $\kappa(\alpha,\beta)$ for $\kappa_{p(\alpha,\beta)+1}$. Thus, we have reserved, for each parameter-program $\alpha$, a proper class $C_{\alpha}$ of fixed points of the $\aleph$ function in $L$ (which are not limits of such fixed points), namely $C_{\alpha}:=\{\kappa(\alpha,\iota):\iota\in\text{On}\}$. We assume without loss of generality (e.g., by replacing $\kappa(\alpha,\beta)$ with $\kappa(\alpha,\beta+\alpha)$ if necessary) that $\kappa(\alpha,\beta)>\text{max}(\alpha,\beta)$. 

We now proceed by an iterated forcing, sabotaging, for each pair $(\gamma,\alpha)$ of a (coded) parameter-program $\gamma$ and an ordinal $\alpha$ the claim that, for all $\delta>\alpha$, $\gamma$ achieves a reduction of OrdCard to DecCard with strictly less than $\delta$ many calls to DecCard. Noting that DecCard is easily definable as a class function, let us denote by $H^{M}:\text{On}\rightarrow\{0,1\}$ an (in fact, the unique) DecCard function in the transitive ZFC-model $M$; when the model is clear from the context, the superscript $M$ will be dropped. By replacing the parameter-program $\gamma$ with the parameter-program $\gamma^{\prime}$ guaranteed to exist by Lemma \ref{no repeated requests}, we can assume without loos of generality that each parameter-program $\gamma$ we consider calls $H$ at most once on each ordinal. 

We now explain one step of the iteration, which will start with $L$. Let an ordered pair $(\gamma,\alpha)$ of a parameter-program $\gamma$ and an ordinal $\alpha$ be given. Consider the cardinal $\kappa:=\kappa(\gamma,\alpha)$. Since $\kappa$ is by definition not a limit of fixed points of the $\aleph$ function in $L$, there is a largest such fixed-point $\kappa^{\prime}$ below $\kappa$. Let $\delta\in((\kappa^{\prime})^{+},\kappa)$ be an $L$-cardinal, and let define the class function $\overline{H}$ as follows, for all $\xi\in\text{On}$: 
$$\overline{H}(\xi)=\begin{cases}0\text{, if }\delta<\xi\leq\kappa\\H(\xi)\text{, otherwise.}\end{cases}$$

We assume that our construction has already taken care of all earlier pairs in the natural well-ordering of pairs of ordinals and that we have thus obtained a transitive $M\models\text{ZFC}$ with the following properties: Denoting by $\nu$ the supremum of the fixed points of the $\aleph$-function below $\kappa$ in $L$ -- which, by assumption, will be strictly smaller than $\nu$ -- then, from $(\nu{+})^{M}$ upwards, the $M$-cardinals are the same as the $L$-cardinals. We will now pass to a generic extension $M[G]$ of $M$ with the following properties:

\begin{enumerate}
\item cardinals up to and including $(\nu^{+})^{M}$ are not changed 
\item cardinals greater than or equal to $(\kappa^{+})^{M}$ are not changed (and thus remain the same as in $L$)
\item $\kappa$ is collapsed 
\item for every cardinal $\mu\geq\kappa^{+}$, we have $2^{\mu}=\mu^{+}$ in $M[G]$ 
\item the computations $\gamma^{\overline{H}}(\kappa)$ and $\gamma^{H^{M[G]}}(\kappa)$ agree
\item either the computation $\gamma^{\overline{H}}(\kappa)$ makes 
at least $\kappa$ many calls to $\overline{H}$, or it does not halt with output $\text{card}^{M[G]}(\kappa)$. 
\end{enumerate}

Let $S$ be the set of ordinals to which the computation $\gamma^{\overline{H}}(\kappa)$ makes calls. 

We distinguish the following cases: 

\begin{case_dist}
For some $\zeta<\kappa$, $(\zeta,\kappa)\cap S=(\zeta,\kappa)\cap\text{Card}^{L}$.
\end{case_dist}

By assumption that $\kappa$ is a fixed-point of the $\aleph$ function, this implies in particular that $\gamma^{\overline{H}}(\kappa)$ makes at least $\kappa$ many calls to $\overline{H}$. In this case, $M[G]$ is obtained by collapsing $\kappa$ to $\delta$. This implies (1), (2), (3) and (4). 
Consequently, we have $\overline{H}=H^{M[G]}$ and thus (5). If this computation does not call $\overline{H}$ for any ordinal greater than or equal to $(\kappa^{+})^{M}$, then, because of (1), further extensions will not change it and we are done. If it does call $\overline{H}$ for some such ordinal, then, by Lemma \ref{many requests precede high requests}, at least $(\kappa^{+})^{M}>\kappa$ many requests to $\overline{H}$ concerning ordinals strictly below $(\kappa^{+})^{M}$ have been made prior to it, and further extensions will not change the answers to these requests, so in this case as well, the computation $\gamma^{H}(\kappa)$ is guaranteed to make more than $\kappa$ many calls to $H$, which suffices. In both cases, we thus have the first option of (6).

\begin{case_dist}
Otherwise.
\end{case_dist}

Thus, there are cofinally in $\kappa$ many $L$-cardinals for which $\overline{H}$ is never called in the computation of $\gamma^{\overline{H}}(\kappa)$. Pick one that is greater than $\delta$ and different from the output of this computation (if there is one); call it $\lambda$. 

Now, $M[G]$ is obtained by (i) collapsing $\kappa$ to $\lambda$ and (ii) collapsing all $L$-cardinals in $(\delta,\lambda)$ to $\delta$. Again, (1), (2), (3) and (4) are standard. 
To see (4), just note that the only argument for which $\overline{H}$ and $H^{M[G]}$ differ is $\lambda$, which, however, by our assumption, is not considered by this computation. Moreover, the construction guarantees that $\text{card}^{M[G]}(\kappa)=\lambda$ and that $\lambda$ is not the output of $\gamma^{H^{M[G]}}(\kappa)$, so we have the second alternative of (6). 

In either case, we have (6): $\kappa$ is a counterexample to the claim that $\rho$ reduces OrdCard to DecCard on all instances above $\alpha$ in reduction complexity strictly less than $\text{id}$. 

For the iteration, we again use Easton support. And again, since the iteration is progressively closed, it yields a model of ZFC in which $\text{OrdCard}\nleq_{\text{OTM}}{<\text{id}}\text{DecCard}$. 

\end{proof}

\begin{remark}
This does \textit{not} mean that it is consistent that a reduction of OrdCard to DecCard will have to make calls to DecCard for \textit{all} ordinals below the input; for example, it is clearly sufficient to only make requests for limit ordinals, or admissible ordinals, or admissible limits of admissible ordinals etc., and answer all requests to other ordinals with $0$ without invoking the DecCard function.
\end{remark}

We end this section with two observations on the cofinality operators. 

\begin{lemma}{\label{cofinality basics}}
    \begin{enumerate}
        \item $\text{DecReg}\leq_{\text{oW}}\text{Cof}$. 
        \item If $V=L$, then $\text{Cof}\leq^{<\omega}\text{DecReg}$.
    \end{enumerate}
\end{lemma}
\begin{proof}
    \begin{enumerate}
    \item Given $\alpha$, check whether $\text{Cof}(\alpha)=\alpha$. 
    \item Recall that $\text{cf}(\alpha)$ is (not just the smallest, but) the \textit{unique} regular cardinal $\kappa$ for which there is an unbounded map $f:\kappa\rightarrow\alpha$. Now, starting with $\beta=\alpha+1$, compute $\text{cf}^{L_{\beta}}(\alpha)$; for the initial value, and whenever that value changes, use DecReg to check whether it is a regular cardinal; once it is, we halt and use it as the output of our computation. Since the value can only fall when it changes, and can only fall finitely often, this uses DecReg only finitely often. 
    \end{enumerate}
\end{proof}

\subsection{On separation and truth}
\label{separation and truth}

In Lemma 11 of \cite{Ca2025Full}, 
we showed that $\Sigma_{n}$-separation is reducible to $\Sigma_{n}$-truth 
using at least\footnote{Note that, since the computation works through a given code for $a$, which may  order $a$ in a non-minimal way, it may well make more such calls in terms of the order-type.} $\text{card}(a)$ applications in input $a$, using the obvious idea: Run through the given set and test each element with the truth predicate for satisfying the property in question. But are that many calls to truth really necessary? This is the question we will treat in this section. 

We begin by noting that a finite number of calls to any $\Sigma_{k}$-truth predicate -- and thus, in particular, a single such call -- is not enough.

\begin{corollary}{\label{sep to truth with finitely many calls}}
    There is no $n\in\omega$ such that $\Sigma_{1}$-Sep$\leq_{\text{OTM}}^{<\omega}\Sigma_{n}$-truth.
\end{corollary}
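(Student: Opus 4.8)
The plan is to argue by a diagonalization rather than by the time bound of Corollary~\ref{cardinal time bound} (the latter does not obviously apply here, since a $\Sigma_{1}$-Sep output $\{x\in S:\phi_{k}(x,\vec{p})\}$ is never ``larger'' than its input $S$): I encode the ordinary, parameter-free OTM halting problem relativized to an ordinal into $\Sigma_{1}$-separation, and exploit that a $\Sigma_{n}$-truth oracle returns only one bit per call. First I would fix once and for all a $\Sigma_{1}$ $\in$-formula $\phi_{k_{0}}(x,y)$ which, for $x\in\omega$, expresses ``the $x$-th OTM-program $\pi_{x}$ halts on input $y$''; the only unbounded quantifier is the existential one over the halting run, so $\phi_{k_{0}}$ is genuinely $\Sigma_{1}$, and its index $k_{0}$ is fixed. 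For an ordinal $\delta$ let $a_{\delta}$ be a code for the $\Sigma_{1}$-Sep instance $(c(\omega),k_{0},c((\delta)))$. Then $a_{\delta}$ is OTM-computable from $\delta$, the family $\{a_{\delta}:\delta\in\text{On}\}$ is a proper class of instances of unbounded rank, and the correct $\Sigma_{1}$-Sep value on $a_{\delta}$ is $K_{\delta}:=\{x<\omega:\pi_{x}(\delta)\!\downarrow\}$.

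Now suppose for a contradiction that $(P,\rho)$ witnesses $\Sigma_{1}$-Sep$\leq_{\text{OTM}}^{<\omega}\Sigma_{n}$-truth for some $n\in\omega$, and fix an effectivizer $F$ of $\Sigma_{n}$-truth. Since $\{a_{\delta}:\delta\in\text{On}\}$ is a proper class while $P^{F}$ makes only finitely many calls on all but set-many inputs, pick $\delta$ so that $P^{F}(a_{\delta},\rho)$ halts -- necessarily with a code for $K_{\delta}$, as $(P,\rho)$ is a reduction -- and makes only finitely many, say $m$, calls to $F$; let $\vec{v}=(v_{0},\ldots,v_{m-1})\in\{0,1\}^{m}$ be the answers $F$ returns at these calls, where we use that $F$ is $\{0,1\}$-valued. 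Then $P^{F}(a_{\delta},\rho)=P^{F\rightarrow\vec{v}}(a_{\delta},\rho)$, and because $\vec{v}$ is a finite binary string, this computation is carried out by an ordinary, oracle-free OTM-program $Q$ with ordinal parameter $\rho$: namely, $Q$ simulates $P$ and answers $P$'s first $m$ oracle calls with the bits of $\vec{v}$, which it has hard-coded. Hence $Q(a_{\delta},\rho)\!\downarrow$ equals a code for $K_{\delta}$. Finally, apply the OTM recursion theorem to $(Q,\rho)$ to obtain an index $x_{0}$ such that $\pi_{x_{0}}$, on input $\mu$, computes $a_{\mu}$, runs $Q(a_{\mu},\rho)$, and -- should this halt with a code for a set $K$ -- diverges if $x_{0}\in K$ and halts if $x_{0}\notin K$. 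For our $\delta$, $Q(a_{\delta},\rho)$ does halt, with a code for $K_{\delta}$, and therefore $\pi_{x_{0}}(\delta)\!\downarrow\iff x_{0}\notin K_{\delta}=\{x<\omega:\pi_{x}(\delta)\!\downarrow\}\iff\pi_{x_{0}}(\delta)\!\uparrow$, a contradiction. Since $n$ was arbitrary, the corollary follows.

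The step that requires care is the passage above from ``$P^{F}$ makes finitely many calls on $a_{\delta}$'' to ``an ordinary OTM-program with an \emph{ordinal} parameter computes $K_{\delta}$ from $a_{\delta}$'': this is precisely where the hypothesis that the oracle is $\Sigma_{n}$-truth enters -- contrast $\Sigma_{1}$-Sep, which is trivially $\leq_{\text{OTM}}^{1}$ to itself -- and one must spell out that the finitely many oracle answers form a finite binary string which can be absorbed into the program, as observed just after the Notation introducing $P^{F\rightarrow\vec{s}}$. The remaining ingredients -- the $\Sigma_{1}$-ness of $\phi_{k_{0}}$, the OTM-computability of $a_{\delta}$ from $\delta$, and the OTM recursion theorem -- are routine.
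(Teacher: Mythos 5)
Your overall strategy is the same as the paper's: a $\Sigma_{n}$-truth effectivizer returns a single bit per call, so finitely many calls yield a finite binary string that can be hard-coded into the program, turning the alleged reduction into an ordinary parameter-OTM computation of a halting set, which is then refuted by diagonalization. The paper does exactly this, taking as its separation instance $(\omega,k,\rho)$ for the $\Sigma_{1}$ formula defining $h_{\rho}:=\{i\in\omega:P_{i}(\rho)\downarrow\}$, and concluding that the hard-coded program $Q$ would solve the halting problem for OTM-programs in the parameter $\rho$.

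There is, however, a genuine gap in your diagonalization. You set $K_{\delta}=\{x<\omega:\pi_{x}(\delta)\downarrow\}$, where the $\pi_{x}$ enumerate \emph{parameter-free} OTM-programs -- they must be parameter-free for $\phi_{k_{0}}(x,y)$ to be a fixed $\Sigma_{1}$ formula in the two displayed variables and for $K_{\delta}$ to be the separation output of $(c(\omega),k_{0},c((\delta)))$. But the diagonal program $\pi_{x_{0}}$ you obtain from the recursion theorem must ``run $Q(a_{\mu},\rho)$'', i.e.\ it needs access to the ordinal parameter $\rho$ of the alleged reduction. A finite program cannot contain an arbitrary (in general uncountable, non-computable) ordinal, so $\pi_{x_{0}}$ as described is not one of the enumerated $\pi_{x}$, and the key equivalence $x_{0}\in K_{\delta}\iff\pi_{x_{0}}(\delta)\downarrow$ is not available. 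The repair is precisely the paper's choice of instance: fold $\rho$ into the parameter tuple of the $\Sigma_{1}$ formula, i.e.\ diagonalize against $h_{\rho}=\{i\in\omega:P_{i}(\rho)\downarrow\}$ via the instance $(\omega,k,\rho)$, so that the diagonal program receives $\rho$ as an \emph{input} rather than having to encode it. (Alternatively, since only set-many inputs are exempt from the finiteness bound, one could choose $\delta$ so that $\rho$ is OTM-computable from $\delta$ without parameters and let $\pi_{x_{0}}$ extract it; but as written your $\delta$ is not chosen this way.) With that change your argument goes through and coincides with the paper's proof.
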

\begin{proof}
    Assume for a contradiction that $(P,\rho)$ witnesses such a reduction of $\Sigma_{1}$-Sep to $\Sigma_{n}$-truth, for some $n\in\omega$. Let $h_{\rho}:=\{i\in\omega:P_{i}(\rho)\downarrow\}$ be the OTM-halting problem in the parameter $\rho$. Then $h_{\rho}$ is a subset of $\omega$. Let $\phi(\rho)$ be the $\Sigma_{1}$-formula that defines $h_{\rho}$ as a subset of $\omega$ in the parameter $\rho$, and let $F$ be a $\Sigma_{n}$-truth-function. Now, by assumption, only finitely many calls are made to $F$ in the computation of $P^{F}(\omega,\rho)$. 
    But this means that the sequence $\vec{s}$ of the finitely many outcomes can be hardcoded in a variant $Q$ of $P$ that, when $P$ calls $F$ for the $j$-th time, just uses the $j$-th bit of $\vec{s}$ as the result to continue. Thus, $Q$ is an OTM-program which, in the parameter $\rho$, computes $h_{\rho}$, i.e., solving the halting problem for OTM-programs in the parameter $\rho$, a contradiction. 
\end{proof}

 The following lemma summarizes the main idea behind the argument:

\begin{lemma}
    Let $F$, $G$ be functions mapping sets of ordinals to sets of ordinals. Assume that there is a parameter-program $(P,\rho)$ such that, for some set $a$, $P^{F}(a,\rho)$ computes $G(a)$ and makes only finitely many calls to $F$.
    \begin{enumerate}
        \item If $F(x)\in L$ for all $x$ (i.e., $F(x)$ is parameter-OTM-computable), then $G(a)\in L$.
        \item If $F(x)$ is OTM-computable in the parameter $\rho$ for all $x$, then $G(a)$ is OTM-computable in the parameter $\rho$ and the input $a$, for every input $a$.
\end{enumerate}
\end{lemma}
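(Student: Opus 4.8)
The plan is to exploit the finiteness of the number of calls to $F$ in order to replace those calls, once and for all, by a hard-coded finite sequence of values, thereby turning $P^{F}$ into an ordinary parameter-OTM-computation whose output is $G(a)$. Concretely, suppose $P^{F}(a,\rho)$ makes exactly $m\in\omega$ calls to $F$, and let $\vec{s}=(s_0,\dots,s_{m-1})$ be the sequence of values returned at those calls, so that, in the notation fixed just before Lemma~\ref{cardinality and reduction complexity}, the computation $P^{F}(a,\rho)$ equals $P^{F\to\vec{s}}(a)$. The key point is that each $s_i$ is itself a value $F(x_i)$ for the set-of-ordinals $x_i$ to which $P$ applies $F$ at the $i$-th call, and the $x_i$ are determined by the earlier part of the computation together with $\vec{s}\restriction i$.

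For part (1), I would argue as follows. By hypothesis each $F(x)$ lies in $L$, hence each $s_i\in L$; being a set of ordinals, $s_i$ can be coded by a set of ordinals lying in $L$, and the finite sequence $\vec{s}$ is therefore (coded by) an element of $L$. Now build the OTM-program $Q$ which runs $P$ but, on its $j$-th call to the external function, instead reads off the $j$-th block of a hard-coded parameter coding $\vec{s}$ and uses that as the answer; then $Q(a,\rho,\vec{s})$ is an honest parameter-OTM-computation halting with output $G(a)$. By absoluteness of OTM-computations between $V$ and $L$ (all parameters $a,\rho,\vec{s}$ being in $L$ --- note $a$ is a set of ordinals on which $P^{F}(a,\rho)$ is assumed to run, and for the clean statement one takes $a\in L$, e.g.\ $a$ itself a set of ordinals produced constructibly), this same computation runs inside $L$ and produces $G(a)$ there, whence $G(a)\in L$. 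Part (2) is the identical argument with ``$L$'' replaced by ``OTM-computable in the parameter $\rho$'': if every $F(x)$ is OTM-computable from $\rho$, then in particular each of the finitely many $s_i$ is; a program can, given $a$ and $\rho$, simulate $P$ step by step, and whenever $P$ would call $F$ on the current set $x_i$ it instead invokes the fixed $\rho$-program computing $F$ and substitutes the result, continuing the simulation; this is a genuine OTM-program in parameter $\rho$ with input $a$ computing $G(a)$.

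The main obstacle --- and the place where finiteness is essential --- is the passage from ``each individual answer is nice'' to ``the whole vector of answers is nice and can be supplied in one go''. For a single $s_i$ this is immediate, but for an $\omega$-sequence of answers one would need the whole sequence to be constructible (resp.\ $\rho$-computable), which does not follow from the termwise assumption and is exactly the phenomenon ruled out by Corollary~\ref{sep to truth with finitely many calls}. With only finitely many calls, $\vec{s}$ is a finite object built from finitely many nice pieces, so it is itself nice and can be absorbed into the parameter (part 1) or recomputed on the fly (part 2). A secondary, purely bookkeeping point is to note that the sets $x_i$ to which $F$ is applied are themselves OTM-effectively readable off the partial computation, so that in part (2) we never need an oracle: we can recompute each $x_i$ and then feed it to the $\rho$-program for $F$. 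With these observations in place the proof is a routine assembly; no nontrivial set-theoretic input beyond absoluteness of OTM-computations is needed.
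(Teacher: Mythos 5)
Your proposal follows essentially the same route as the paper: since only finitely many calls are made, the finite sequence of answers can be absorbed into the computation, turning $P^{F}$ into an ordinary parameter-OTM-computation, with absoluteness (for part 1) giving $G(a)\in L$; your remark that the statement of (1) implicitly needs $a$ itself to be constructible is a fair and careful reading. One slip in part (2): you speak of invoking ``the fixed $\rho$-program computing $F$'' and of feeding the recomputed $x_i$ to ``the $\rho$-program for $F$'', but the hypothesis only says that each individual value $F(x)$ is OTM-computable from $\rho$ --- it provides no single program computing $F$ uniformly in its argument, so that step as written cannot be performed. The repair is exactly what the paper does (and what your part (1) does with the values themselves): since there are only finitely many calls, choose for each $i$ a program $Q_i$ with $Q_i(\rho)=s_i$ and hard-code the finite tuple of indices into the simulating program, substituting $Q_i(\rho)$ at the $i$-th call; with that adjustment your argument coincides with the paper's proof.
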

\begin{proof}
We only show (2); the proof for (1) is completely analogous. 

Let $\vec{v}=(v_{1},...,v_{k})$ be the sequence of values that $F$ returns in the finitely many calls to $F$. By assumption, let $Q_{1},...,Q_{k}$ be OTM-programs that compute $v_{1},...,v_{k}$ in the parameter $\rho$, respectively. Then we can modify $P$ to work as follows: For $i\leq k$, in the $i$-th call to $F$, it runs $Q_{i}(\rho)$ and uses the output as the return value of $F$. The computation will be (as a sequence of computational states) identical to that of $P^{F}(a,\rho)$, and thus have the same output; but it also a computation that only uses the input $a$ and the parameter $\rho$.    
\end{proof}

Intuitively, one should expect that there is no better way to obtain separated sets from truth than considering each element of the given set separately. 
Under a moderate extra assumption, we can indeed prove this:

\begin{lemma}
    Assume that there is a definable global well-ordering $\leq^{\ast}$ of $V$ which is compatible with the $\in$-relation.\footnote{This is equivalent to assuming $V=\text{HOD}$.} Then the following is true: Let $f:V\rightarrow\text{On}$ be a class function such that, for all but set many values of $x$, we have $f(x)<\text{card}(x)$. 
    Then, for no $m\in\omega$ we have $\Sigma_{4}$-Sep$\leq_{\text{OTM}}^{f}\Sigma_{m}$-truth.
\end{lemma}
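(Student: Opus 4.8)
The plan is to derive a contradiction from the existence of such a reduction by diagonalizing against the global well-ordering $\leq^{\ast}$ and exploiting that $\Sigma_4$-separation can, in a suitable instance, reconstruct large initial segments of $V$ (via $\text{HOD}$-style Skolem hulls) whereas a reduction with only $f(x)<\text{card}(x)$ many calls cannot. Concretely, I would first choose the instance $a$ carefully: take $a$ to be (a code for) some cardinal $\kappa$ that is large enough relative to the parameter $\rho$ and relative to the set of "exceptional" inputs on which $f(x)\ge\text{card}(x)$, so that $f(a)<\kappa=\text{card}(a)$. The idea is that the $\Sigma_4$-Sep call we care about should ask, inside $\kappa$, for the set of ordinals $\xi<\kappa$ satisfying a $\Sigma_4$-property that encodes membership in (the transitive collapse of) the $\leq^{\ast}$-Skolem hull of $\xi\cup\{\text{parameters}\}$ — this is where compatibility of $\leq^{\ast}$ with $\in$ and $V=\text{HOD}$ is used, since it makes the relevant hull and its collapse definable by a bounded-complexity formula, and Corollary \ref{cardinal time bound} / Lemma \ref{cardinality and reduction complexity} pin down where computations must halt.

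The second main step is the cardinality/time-bound argument, paralleling the proof of the previous theorem. Since $f(a)<\text{card}(a)=\kappa$, and $\Sigma_m$-truth is a class function to $\{0,1\}$ (so it trivially does not raise cardinalities and is definable), Corollary \ref{cardinal time bound} applies: if the computation $P^{F}(a,\rho)$ makes fewer than $\kappa$ many calls to $F$ before time $\kappa$, then it must halt in fewer than $\kappa$ steps (or diverge without further calls). Because $f(a)<\kappa$ and $f$ bounds the order type of the calls, the computation halts in $<\kappa$ steps, so by Lemma \ref{cardinality and reduction complexity} the whole run is captured inside $L_{\sigma_1(\cdots)}$ where the relevant $\sigma_1$-ordinal has cardinality $\le\text{card}(a)=\kappa$ — in particular the run, and hence the output $G(a)$, lives below $\kappa$ in a way that depends only on $a$, $\rho$, and the finitely-or-$<\kappa$-many returned truth values. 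One then argues that the returned $\{0,1\}$-values carry too little information: they form a binary string of length $<\kappa$ which, together with $a$ and $\rho$, must determine the full answer set $\{\xi<\kappa: \varphi_4(\xi,\vec p)\}$; but by choosing $\varphi_4$ to encode something of "full $\kappa$-complexity" (e.g. the $\leq^{\ast}$-least witness coding, for unboundedly many $\xi<\kappa$, a bit that varies independently), no such short string can do this — a counting/pigeonhole contradiction, exactly in the spirit of Corollary \ref{sep to truth with finitely many calls} but scaled from $\omega$ to $\kappa$ and using $f(a)<\kappa$ instead of finiteness.

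Alternatively, and perhaps more cleanly, I would run the forcing-free diagonalization directly: fix the witness $(P,\rho)$ and the exceptional set $E=\{x:f(x)\ge\text{card}(x)\}$; using $\leq^{\ast}$, define by recursion on cardinals $\kappa\notin$ (the closure of $E$ and $\rho$) a $\Sigma_4$ instance $a_\kappa$ and a target property so that the correct output of $\Sigma_4$-Sep on $a_\kappa$ is, say, a code for $L_\kappa^{\text{relativized}}$ or for the $\leq^{\ast}$-least element of some definable family not OTM-reconstructible from $<\kappa$ bits plus $\kappa$; then Corollary \ref{cardinal time bound} forces $P^{F}(a_\kappa,\rho)$ to commit to an answer computed from $<\kappa$ truth-bits, which, by a Fodor/pressing-down or simple cardinality argument, cannot be correct for all such $\kappa$ — contradicting that $(P,\rho)$ witnesses the reduction for all but set many instances. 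The main obstacle I anticipate is making precise the claim that a $\Sigma_4$-definable separation instance genuinely requires $\kappa$ bits of truth-oracle information to answer — i.e., isolating exactly which $\Sigma_4$ property (using the $V=\text{HOD}$ well-ordering) is "incompressible" enough that fewer than $\kappa$ calls provably fail — and in checking that the chosen property is genuinely $\Sigma_4$ and not of lower complexity, since the complexity bound $4$ (rather than, say, $2$) in the statement suggests the hull/collapse coding is exactly what pushes it up to $\Sigma_4$. Verifying that this coding is $\Sigma_4$ and that the diagonalization respects "all but set many $x$" will be the technical heart of the argument.
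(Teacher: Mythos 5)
You have the right general shape --- diagonalize against the assumed reduction, in the spirit of Corollary \ref{sep to truth with finitely many calls} scaled from $\omega$ up to $\kappa$, using $f(a)<\mathrm{card}(a)$ to bound the length of the oracle-answer sequence --- but the step you yourself flag as the ``main obstacle'' (exhibiting a concrete $\Sigma_{4}$ instance that provably cannot be answered from fewer than $\kappa$ truth-bits) is precisely the content of the proof, and neither of your two sketches supplies it. A counting/pigeonhole argument does not work as stated: for each fixed instance there is only one correct output, and the advice string may vary with the instance, so comparing ``$2^{<\kappa}$ many advice strings'' with the complexity of a single answer set yields no contradiction; likewise Fodor/pressing-down and the Skolem-hull/\text{HOD}-collapse coding play no role. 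The actual argument is a direct self-referential diagonalization: pick $\kappa>\rho$ with $2^{<\kappa}=\kappa$ (outside the exceptional set for $f$), use the (WLOG $\Sigma_{2}$-definable) global well-ordering to definably enumerate $\mathfrak{P}^{<\kappa}(\kappa)$ --- i.e.\ all possible hardwired answer sequences of length $<\kappa$ --- in order type $\kappa$, pair these with program indices via a definable $h:\kappa\rightarrow\omega\times\mathfrak{P}^{<\kappa}(\kappa)$, and define the diagonal set $S:=\{\iota<\kappa:\neg P_{h(\iota)_{0}}^{F\rightarrow h(\iota)_{1}}(\rho)\downarrow=1\}$. Because halting of a computation with hardwired answers is $\Sigma_{1}$ and the well-ordering is $\Sigma_{2}$, $S$ is $\Sigma_{4}$-definable, so it is a legitimate separation instance; this is exactly where $V=\text{HOD}$ enters, not via hulls or collapses.

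Given the assumed reduction, the run of $P^{F}$ on this instance makes $\xi<\kappa$ calls (this is the only use of $f(x)<\mathrm{card}(x)$; Corollary \ref{cardinal time bound} is not needed at all), so the returned sequence $\vec{v}$ is itself an element of $\mathfrak{P}^{<\kappa}(\kappa)$. Converting $P$ into a decision program $Q$ for membership in $S$, with index $k$, the pair $(k,\vec{v})$ has an $h$-preimage $\alpha<\kappa$, and unwinding the definitions gives $\alpha\in S\Leftrightarrow\alpha\notin S$, the desired contradiction. So your proposal identifies the correct strategy but leaves the decisive construction --- the definable enumeration of (program, advice)-pairs made possible by $2^{<\kappa}=\kappa$ and the global well-ordering, and the $\Sigma_{4}$ diagonal set built from it --- as an open obstacle; without it, the ``incompressibility'' you invoke is exactly what remains unproved.
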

\begin{proof}
    Let $f$ be as in the assumption. We can assume without loss of generality\footnote{Cf., e.g., Hamkins, \cite{Hamkins:MO}.} that $\leq^{\ast}$ is $\Sigma_{2}$-definable. 
    Assume for a contradiction that, for some $m\in\omega$, some parameter-program $(P,\rho)$ witnesses $\Sigma_{4}$-Sep$\leq_{\text{OTM}}^{f}\Sigma_{m}$-truth. Pick an uncountable cardinal $\kappa>\rho$ such that $2^{<\kappa}=\kappa$ and and let $\leq^{\prime}$ be the $\leq^{\ast}$-smallest well-ordering of $\mathfrak{P}^{<\kappa}(\kappa)$ in order type $\kappa$.\footnote{To see that there are unboundedly many such $\kappa$, note that, defining $\alpha_{0}:=\aleph_{1}$, $\alpha_{\iota+1}:=2^{\alpha_{\iota}}$ and $\alpha_{\lambda}:=\bigcup_{\iota<\lambda}\alpha_{\iota}$ for a limit ordinal $\lambda$, each fixed point of the normal function $\iota\mapsto\alpha_{\iota}$ will have this property.} 
    Let $g:\kappa\rightarrow\mathfrak{P}^{<\kappa}(\kappa)$ be the enumeration induced by $\leq^{\prime}$, and define $h:\kappa\rightarrow\omega\times\mathfrak{P}^{<\kappa}(\kappa)$ as $h(\omega\iota+k)=(k,f(\iota))$ for $\iota<\kappa$, $k\in\omega$. 

Define a subset $S\subseteq\kappa$ as follows: For $\iota<\kappa$, we have $\iota\in S:\Leftrightarrow \neg P_{h(\iota)_{0}}^{F\rightarrow h(\iota)_{1}}(\rho)\downarrow=1$.  $S$ is clearly definable as a subset of $\kappa$, and the definition is $\Sigma_{4}$.

Now, by assumption, $P^{F}(\rho)$ computes $S$, making $\xi<\kappa$ many calls to $F$. Let $\vec{v}:=(i_{\iota}:\iota<\xi)$ be the sequence of values returned by $F$ to these requests. Thus, $P^{F\rightarrow\vec{v}}(\rho)$ computes $S$ as well. Clearly, $\vec{v}$ can be regarded as (corresponding to) an element of $\mathfrak{P}^{<\kappa}(\kappa)$. We can modify $P$ to a program $Q$ -- in the same parameters -- which, rather than writing $S$ to the tape and halting, takes as an additional input some $\iota<\kappa$ and decides whether $\iota\in S$. Let $k$ be the index of $Q$ in the enumeration of programs, and let $\alpha<\kappa$ be the pre-image of $(k,\vec{v})$ under $h$. Then $P_{h(\alpha)_{0}}^{F\rightarrow h(\alpha)_{1}}(\rho)\downarrow=1 \Leftrightarrow \alpha\in S\Leftrightarrow\neg P_{h(\iota)_{0}}^{F\rightarrow h(\iota)_{1}}(\rho)\downarrow=1$, a contradiction. 

\end{proof}

\begin{question}
    Can the assumption of a definable global well-ordering be eliminated from the last result? We conjecture that this is the case.
\end{question}

\section{Conclusion and further work}

Clearly, there are many other principle that could be meaningfully investigated with respect to reduction complexity. 

While most results in this paper should be conceptually stable under changes of the underlying model of computation, some of them might allow for refinements that are more to the point. The reducibility concept defined and applied in this paper allows formalizing of intuitively meaningful and natural questions such as ``how many applications of power set are needed in order to calculate the cardinality of power sets?''. However, there are some questions of this kind for which the answer given by our formalization is not quite satisfying. A typical example would be ``how many applications of power set are needed in order to calculate cardinalities?''. The answer given in \cite{Ca2025Full}, Proposition $14$ -- that one application is enough -- depends heavily on the fact that, since sets need to be encoded before an OTM can operate on them, every set given to an OTM comes with a well-ordering. For such questions, it should be interesting to study similar reducibility notions on models of transfinite computability that can compute directly on sets, rather than on encodings of sets; Passmann's ``Set Register Machines'' introduced in \cite{Passmann} would be an example of such a notion.

\end{document}